\newcommand{\remove}[1]{}
\newtheorem*{rep@theorem}{\rep@title}
\newcommand{\newreptheorem}[2]{%
\newenvironment{rep#1}[1]{%
 \def\rep@title{#2 \ref{##1}}%
 \begin{rep@theorem}}%
 {\end{rep@theorem}}}
\newcommand\numberthis{\addtocounter{equation}{1}\tag{\theequation}}
\newtheorem{thm}{Theorem}[section]
\newtheorem{claim}[thm]{Claim}
\newtheorem{lem}[thm]{Lemma}
\newtheorem{define}[thm]{Definition}
\newtheorem{conjecture}[thm]{Conjecture}
\newtheorem{fact}[thm]{Fact}
\def\F{{\mathbb{F}}}
\def\Z{{\mathbb{Z}}}
\def\R{{\mathbb{R}}}
\def\C{{\mathbb{C}}}
\def\P{{\mathbb{P}}}
\def\Ind{{\mathbf{1}}}
\def\_{\,\,\,\,\,}
\def\rank{\textsf{rank}}
\def\crank{\textsf{crank}}
\newcommand{\eps}{\epsilon}
\begin{document}

\title{Proof of the Kakeya set conjecture over rings of integers modulo square-free $N$}
 \author{Manik Dhar\thanks{Department of Computer Science, Princeton University. Email: \texttt{manikd@princeton.edu}. } \and  Zeev Dvir\thanks{Department of Computer Science and Department of Mathematics,
 Princeton University. 
 Email: \texttt{zdvir@princeton.edu}. } }

\date{}
\maketitle

\begin{abstract}
    A Kakeya set  $S \subset (\Z/N\Z)^n$ is a set containing a line in each direction. We show that, when $N$ is any square-free integer, the size of the smallest Kakeya set in $(\Z/N\Z)^n$ is at least $C_{n,\eps}  N^{n - \eps}$ for any $\eps$ -- resolving a special case of a conjecture of Hickman and Wright. Previously, such bounds were only known for the case of prime $N$. We also show that the case of general $N$ can be reduced to lower bounding the $\F_p$ rank of the incidence matrix of points and hyperplanes over $(\Z/p^k\Z)^n$.
\end{abstract}




\section{Introduction}
Given a finite abelian ring $R$, we consider the space of $n$-tuples over $R$, denoted $R^n$. In this space we may define a line in direction $b \in R^n \setminus \{0\} $ to be a subset of the form $\{ a + tb | t\in R\}$ where $a\in R^n$.  We denote the set of directions in $R^n$ (projective space) by $\P R^{n-1}$. For now we will postpone the precise definition of $\P R^{n-1}$  to a later stage. A Kakeya set in $R^n$ is a set containing a line in every direction:

\begin{define}[Kakeya set]
A set $S \subset R^n$ is said to be a Kakeya set if given any direction $b\in \P R^{n-1}$ there exists a point $a\in R^n$ such that the line $\{a+tb|t\in R\}$ is contained in $S$.
\end{define}

The question of lower-bounding the size of the smallest Kakeya set over finite fields was initially raised by Wolff \cite{wolf1999} as a possible approach for attacking the notorious Euclidean Kakeya conjecture in $\R^n$, and later found other applications including in theoretical computer science (see \cite{dvir2010incidence} for a survey of those). Wolff's conjecture ,later known as the finite field Kakeya conjecture stated that, the size of the smallest Kakeya set in $\F_q^n$ should be at least $C_n q^n$ for some constant $C_n$ depending only on the dimension. Coming from the Euclidean problem, one typically  thinks of $n$ as fixed and $q$ growing, however, later applications deal with other scenarios and require  a more accurate control of the constant. Wolff's conjecture was proved using the polynomial method in \cite{dvir2009size} with subsequent improvements given in  \cite{saraf2008,dvir2013extensions} culminating in the following bound.

\begin{thm}[\cite{dvir2013extensions}]\label{thm:TightFqbound}
Let $\F_q$ denote a finite field of  order $q$ and let  $S \subset\F_q^n$ be a Kakeya set. Then
$$|S| \geq \frac{q^n}{\left(2-1/q\right)^n} $$
\end{thm}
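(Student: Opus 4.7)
My plan is to use the polynomial method with multiplicities, in the style of Saraf--Sudan and Dvir--Kopparty--Saraf--Sudan. Let $m$ be a positive integer parameter (to be optimized at the end), and set $D = mq - 1$. The space of polynomials in $\F_q[x_1, \ldots, x_n]$ of total degree at most $D$ has dimension $\binom{D+n}{n}$, and requiring vanishing to multiplicity $m$ at a given point imposes exactly $\binom{m+n-1}{n}$ homogeneous linear constraints on the coefficients. Assume for contradiction that $|S| < \binom{D+n}{n}/\binom{m+n-1}{n}$. Then by linear algebra there is a non-zero polynomial $P$ of degree at most $D$ that vanishes to order $m$ on every point of $S$. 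I would choose $P$ of degree exactly $D$, and let $P^H$ denote its top-degree homogeneous component.

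Next, I exploit the Kakeya property. For each direction $b \in \F_q^n \setminus \{0\}$, pick a line $\{a+tb : t \in \F_q\}$ contained in $S$, and form $Q(t) := P(a+tb) \in \F_q[t]$. Then $\deg Q \leq D = mq - 1$, yet $Q$ vanishes to order at least $m$ at each of the $q$ elements of $\F_q$, giving $mq$ roots counted with multiplicity. Since $mq > D$, we must have $Q \equiv 0$. Reading off the coefficient of $t^D$ in $Q$ yields $P^H(b) = 0$, and as this holds for every nonzero $b$ (and then, by homogeneity, at $0$ as well), $P^H$ vanishes as a function on all of $\F_q^n$.

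The main obstacle is to upgrade ``$P^H$ vanishes as a function'' to ``$P^H$ is the zero polynomial'', which is non-trivial when $D \geq q$: for instance, any multiple of $x_i^q - x_i$ vanishes as a function but is not the zero polynomial. To handle this, I would modify the construction from the start to use only polynomials whose individual-variable degrees are each at most $q-1$, a space of dimension $q^n$ that maps injectively onto functions $\F_q^n \to \F_q$. One must redo the dimension count in this restricted space and verify that the line argument still forces $P^H(b) = 0$; then the homogeneous polynomial $P^H$, now with individual-variable degrees bounded by $q-1$, has no choice but to be zero as a polynomial, contradicting $\deg P = D$.

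Finally, I would optimize over $m$. The resulting inequality $|S| \geq \binom{D+n}{n}/\binom{m+n-1}{n}$ (or its analog in the restricted space) tends to $q^n$ as $m \to \infty$, and a direct finite-$m$ computation shows that the ratio is always at least $q^n/(2 - 1/q)^n$ for a suitable choice of $m$, which yields the claimed tight bound.
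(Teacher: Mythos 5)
First, note that the paper does not prove this statement itself: it is quoted from \cite{dvir2013extensions}, and the machinery that would reprove it is the one deployed in Section~\ref{sec:sqfreegen} (Lemma~\ref{lem:infitDerLine}, Claim~\ref{claim:crankBparameterBound}, Lemma~\ref{multSchwartz}, and the tensor-power trick), specialized to a single prime. Measured against that proof, your proposal is missing the central idea, and the step where you acknowledge "the main obstacle" is exactly where it breaks. Your proposed patch --- restricting from the start to polynomials with individual degrees at most $q-1$ --- is incompatible with the interpolation step. That restricted space has dimension at most $q^n$, so finding a nonzero $P$ vanishing to multiplicity $m$ on $S$ requires $|S|\binom{m+n-1}{n} < q^n$, and the best bound the argument can then produce is $|S| \ge q^n/\binom{m+n-1}{n}$, which \emph{deteriorates} as $m$ grows instead of tending to $q^n/(2-1/q)^n$. (Worse, for $D = mq-1 < n(q-1)$ the set of reduced monomials of total degree at most $D$ is a small corner of the box, so the dimension is far below $q^n$.) Your closing claim that the unrestricted ratio $\binom{D+n}{n}/\binom{m+n-1}{n}$ tends to $q^n$ should itself have been a red flag: if the argument with simple vanishing at directions went through, it would prove $|S| \ge q^n(1-o(1))$, contradicting the constructions of size roughly $q^n/2^{n-1}$ behind Theorem~\ref{thm:squarefreeUpper}. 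The factor $(2-1/q)^n$ is not an artifact of the bookkeeping; it is forced by the very obstacle you are trying to patch away.

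The correct resolution (the "extended method of multiplicities" of \cite{dvir2013extensions}, reproduced in Lemma~\ref{lem:infitDerLine}) is to force $P^H$ to vanish to \emph{high} multiplicity at every direction and then kill it with Schwartz--Zippel with multiplicities (Lemma~\ref{multSchwartz}) rather than with a degree restriction. Concretely: take $P$ homogeneous of degree $kq-1$ vanishing to multiplicity $m = 2k - k/q$ on $S$; for every Hasse derivative $P^{(\mathbf{i})}$ of weight $w < k$, the restriction to a line in $S$ has degree at most $kq-1-w$ and vanishes to multiplicity $m-w$ at $q$ points, and the choice $m = 2k-k/q$ is exactly what makes $(m-w)q$ exceed that degree for all $w \le k-1$; reading off top coefficients gives $P^{(\mathbf{i})}(b)=0$ for all $\mathbf{i}$ of weight below $k$ and all $b$, i.e.\ $P$ vanishes to multiplicity $k$ on all of $\F_q^n$ (using homogeneity to cover $0$ and scalar multiples); then Lemma~\ref{multSchwartz} gives $kq^n \le (kq-1)q^{n-1}$, a contradiction. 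The count $|S|\binom{2k-k/q+n-1}{n} \ge \binom{kq+n-2}{n-1}$ has ratio of leading terms $\bigl(q/(2-1/q)\bigr)^n$ up to factors that are absorbed by applying the inequality to $S^{\sqrt{k}}$ (Lemma~\ref{lem:productkakeya}) and letting $k \to \infty$, as in~\eqref{eq:finalSbound}. Without the high-order vanishing at directions and Lemma~\ref{multSchwartz}, your argument cannot be completed to the stated constant.
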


When $R$ is not a field much less is known. The problem of lower bounding the size of Kakeya sets for the rings $\Z/p^k\Z$ and $\F_q[x]/\langle x^k\rangle$ was first proposed in \cite{ellenberg2010kakeya} as a step in the direction of the Euclidean problem as these rings contain `scales' in a way that does not exist over a finite field and is reminiscent of the real numbers. While the additive combinatorics techniques that preceded the polynomial method \cite{Bou99,katz2002new}  work over any abelian ring, they currently only lead to bounds of the form $|S| \geq |R|^{\alpha n}$ with the  $\alpha<0.6$. Another, more recent, work to study Kakeya sets (and related operators) over finite rings is  \cite{hickman2018fourier} in which a connection between bounds for Kakeya sets over the rings  $\Z/p^k\Z$  and  the Minkowski dimension of $p$-adic Kakeya sets is established. For the two dimensional case, when $R = \F_q[x]/\langle x^k \rangle$ or $ R = \Z/p^k\Z$, Dummit and Hablicsek showed a (tight) bound of $|S| \geq |R|^2/2k$ in \cite{dummithablicsek2013}. 

Similar to the Euclidean setting, Kakeya sets with Haar measure $0$ can be constructed for the ring of $p$-adic integers and the power series ring $\F_q[[x]]$. The constructions can be found in \cite{dummithablicsek2013,Fraser_2016,CML_2018__10_1_3_0,hickman2018fourier}. As in the Euclidean setting, we want to bound the Minkowski dimension of Kakeya sets for these rings which is connected to the size of Kakeya sets in $\Z/p^k\Z$ and $\F_q[x]/\langle x^k \rangle$.

The Kakeya conjecture  over the rings $R = \Z/N\Z$ was stated  in \cite{hickman2018fourier} as follows. As seen above, for finite fields  the loss of $\eps$ in the exponent is not necessary. However, as we shall see later, for composite $N$ we must allow it.

\begin{conjecture}[Kakeya set conjecture over $\Z/N\Z$]\label{conj:kakeyaR}
For all $\eps > 0$ and integers $n$ there exists a constant $C_{n,\eps}$ such that any Kakeya set $S \subset (\Z/N\Z)^n$ satisfies 
$$ |S| \geq C_{n,\eps} \cdot N^{n-\eps}.$$
\end{conjecture}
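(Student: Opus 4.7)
The plan is to use the Chinese Remainder Theorem (CRT) and iterate Theorem~\ref{thm:TightFqbound} slice-by-slice through the prime factors of $N$. Writing $N = p_1 \cdots p_k$ with distinct primes, CRT gives $(\Z/N\Z)^n \cong \prod_{i=1}^{k} \F_{p_i}^n$ and identifies the direction set $\P R^{n-1}$ with $\prod_i \P\F_{p_i}^{n-1}$. Crucially, as $t$ ranges over $\Z/N\Z$ the tuple $(t \bmod p_i)_i$ ranges over the full product $\prod_i \F_{p_i}$, so a line in $(\Z/N\Z)^n$ in direction $b$ corresponds to the \emph{Cartesian product} $L_1\times\cdots\times L_k$ of the $\F_{p_i}^n$-lines in the reduced directions $b_i = b \bmod p_i$. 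Hence a Kakeya set $S$ becomes a subset of $\prod_i \F_{p_i}^n$ that, for every tuple of directions, contains a product of such lines.

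I would then induct on $k$. Peel off one prime $p = p_k$ and for each $y \in \F_p^n$ form the fiber $S_y = \{x \in (\Z/(N/p)\Z)^n : (x,y)\in S\}$. Fix a direction $\vec b'$ in $(\Z/(N/p)\Z)^n$ and vary $b_p \in \P\F_p^{n-1}$: the product-line condition yields, for each $b_p$, a line $L_p(a_p,b_p)\subset \F_p^n$ along which every $y$ has $S_y$ containing a line in direction $\vec b'$. As $b_p$ runs over all $\F_p^n$-directions, these lines form a Kakeya set in $\F_p^n$, so by Theorem~\ref{thm:TightFqbound} at least $c_n p^n$ values of $y$ satisfy ``$S_y$ contains a line in direction $\vec b'$''. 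Summing over $\vec b'$ and applying a reverse-Markov argument, a positive fraction of $y \in \F_p^n$ have fibers that are \emph{partially} Kakeya in $(\Z/(N/p)\Z)^n$, covering a positive fraction of all its directions. Applying the inductive hypothesis in a strengthened \emph{statistical} form (``a $\delta$-fraction of directions covered implies $|T| \geq c_n(\delta) M^n$''), one gets $|S_y| \geq c' (N/p)^n$ for such $y$, and multiplying by the count of good $y$'s closes the induction with $|S| \geq c_n^{O(k)} N^n$. Since any square-free $N$ has $k = O(\log N/\log\log N)$ prime factors, this constant is $N^{-o(1)}$, which delivers the $\epsilon$-loss bound of Conjecture~\ref{conj:kakeyaR} for every $\epsilon > 0$.

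The main obstacle is the statistical Kakeya base case in $\F_p^n$: a bound of the form ``if $T \subset \F_p^n$ contains a line in at least a $\delta$-fraction of all directions, then $|T| \geq c_n(\delta) p^n$'', with explicit control of the dependence on $\delta$. This strengthening is essential because slicing one prime at a time destroys the strict Kakeya property of the fibers and only preserves a partial one, so the induction cannot run on the exact statement of Conjecture~\ref{conj:kakeyaR} alone. Versions of this statistical bound follow from extensions of the polynomial method (e.g.~\cite{dvir2013extensions}), but tracking $c_n(\delta)$ through $k$ slicing steps is delicate. As hinted in the abstract, a cleaner alternative is to recast the whole problem as a rank lower bound on the point-hyperplane incidence matrix of $(\Z/p^k\Z)^n$; for square-free $N$ this rank decomposes via CRT into a tensor of known $\F_p$-rank estimates, and the $\epsilon$-loss enters only in the final aggregation across primes.
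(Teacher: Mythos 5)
Your CRT reduction and the observation that a line in $(\Z/N\Z)^n$ is a Cartesian product of lines mod each prime are correct, and so is the first step of the slicing: for fixed $\vec b'$ the union $\bigcup_{b_p}L_p(b_p,\vec b')$ is a Kakeya set in $\F_p^n$, so Theorem~\ref{thm:TightFqbound} gives at least $c_n p^n$ good fibers for that one direction. The genuine gap is exactly the point you flag as the ``main obstacle,'' and it is fatal rather than merely delicate. The induction cannot run on the Kakeya statement itself: after one slicing step the fibers only cover a $\delta$-fraction of the remaining directions, so every subsequent level needs a \emph{statistical} hypothesis over a composite modulus (``a $\delta$-fraction of directions covered implies $|T|\ge c_n(\delta)M^n$''), which you neither formulate precisely nor prove. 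Worse, the quantitative losses compound: at each of the $k=\Theta(\log N/\log\log N)$ levels your reverse-Markov step costs a factor of the current density both in the count of good fibers and in the density handed to the next level, and the next level's constant $c_n(\delta)$ is then evaluated at this shrunken $\delta$. With the statistical finite-field bounds the polynomial method actually provides (dependence on $\delta$ of the form $\delta^{n}$, or at best linear in $\delta$), the recursion $F_k(\delta)\approx\tfrac{1}{2}F_1(\delta)\,F_{k-1}(F_1(\delta)/2)$ yields a final constant of the shape $\exp\bigl(-\Omega(n^{k})\bigr)$, or $2^{-\Omega(k^2)}$ in the most optimistic linear case; since $n^{k}$ and $k^2=(\log N/\log\log N)^2$ both exceed $\eps\log N$ for large $N$, the resulting bound is strictly weaker than $C_{n,\eps}N^{n-\eps}$. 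So even granting the unproven statistical lemma in its known forms, the slicing induction does not establish Conjecture~\ref{conj:kakeyaR}.

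The paper sidesteps this entirely by never slicing. It replaces $|S|$ by the rank of the line matrix $M_S$ (Lemma~\ref{lem:rankSizeLine}) and multiplies $M_S$ on the right by $W_{p,n}\otimes I$ (by an evaluation matrix $E\otimes I$ in the sharp version). By Lemma~\ref{lem:centeredHypAct} the row indexed by $(b_p,b_q)$ becomes $\Ind_{\overline{H}_{b_p}}\otimes\Ind_{L_q(b_p,b_q)}$: the first tensor factor depends only on $b_p$, which is precisely the decoupling your slicing tries to achieve, but achieved losslessly. Lemma~\ref{lem:tensorProductSpaceBound} then multiplies the rank of $\{\Ind_{\overline{H}_{b_p}}\}$ (Theorem~\ref{thm:pRankSubs}) by the inductive rank bound coming from the $\F_q$-side, losing only a factor $2^{O(n)}$ per prime, i.e.\ $N^{-o(1)}$ overall. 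Finally, your proposed ``cleaner alternative'' via the incidence matrix does not work as stated: the reduction of Theorem~\ref{thm:reduceRankpk} relies on the cyclotomic identity $\Phi_{p^k}(1)=p\equiv 0\bmod p$ and is therefore specific to prime powers (for $N$ with two distinct prime factors $\Phi_N(1)=1$ and Lemma~\ref{lem:rankCtoF} gives nothing), and the $\F_p$-rank of $W_{p^k,n}$ for $k\ge 2$ is open in any case.
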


 Already when $N=p_1\cdot p_2$ is a product of two primes of roughly the same magnitude, the polynomial method fails to work. One way to see this is to notice that any  polynomial  over $R = \Z/p_1 p_2 \Z$ has degree at most $\max\{ p_1 - 1, p_2 - 1\} \approx N^{1/2}$ in each variable. This limits the dimension of the space of polynomials to $\approx N^{n/2}$ and prevents us from interpolating a non-zero polynomial vanishing on $S$, when $S$ is larger than that dimension (which is the first step in the polynomial method).

Our main contribution is a proof of Conjecutre \ref{conj:kakeyaR} for square-free integers $N$. At this point we should say what is our definition of projective space for these rings as this will determine the definition of Kakeya sets. For $N = p_1\cdots p_r$ a product of $r$ distinct primes, the Chinese remainder theorem gives us $$(\Z/N\Z)^n \cong  \F_{p_1}^n \times \cdots \times \F_{p_r}^n$$ under the natural isomorphism $x \mapsto (x \mod p_i)_{i=1}^r$. For each $i$ the projective space $\P \F_{p_i}^{n-1}$ is defined to be the set of all non-zero vectors in $\F_{p_i}^{n}$ up to scaling. Finally, we take the projective space to be
$$ \P (\Z/N\Z)^{n-1} =  \P \F_{p_1}^{n-1} \times \cdots \times \P \F_{p_r}^{n-1} .$$  In other words, a direction in $(\Z/N \Z)^n$ is represented by a vector $b \in (\Z/N \Z)^n$ that is non-zero modulo $p_i$ for all $i$ and we identify two directions if they can be obtained from one another by scaling with an invertible  ring element.\footnote{ In both  \cite{ellenberg2010kakeya,hickman2018fourier}, where the emphasis was on rings such as   $\Z/p^k \Z$ or $\F_q[x]/x^k$, the definition of projective space over a ring $R$ require the direction $b$ to have at least one invertible coordinate. This definition leads to the same notion of projective space as ours for the rings $\Z/p^k \Z$ or $\F_q[x]/x^k$. However, in the case of $\Z/N\Z$ where $N$ has more than one distinct prime factors their definition is different than ours. In our definition a direction might have all coordinates as zero-divisors. For example, in the case of composite square free $N$ if the reductions $b \mod p_i$ have disjoint supports we get lines with directions represented by non-zero divisors in each co-ordinate. Requiring at least one invertible coordinate leads, in the case of composite square free $N$, to a definition that is basis dependent and less natural. } 

\begin{thm}[Kakeya bound for square-free $N$]\label{thm:SquarefreeNtheorem}
Let $N=p_1\hdots p_r$ be a product of $r$ distinct primes and set $R = \Z/N\Z$. Any Kakeya Set $S\subseteq R^n$ satisfies
$$|S| \geq \frac{N^n}{\prod\limits_{i=1}^r (2-1/p_i)^{n}} \geq \frac{N^n}{2^{rn}}.$$
\end{thm}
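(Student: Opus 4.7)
Via the Chinese Remainder Theorem we identify $(\Z/N\Z)^n$ with $\F_{p_1}^n \times \cdots \times \F_{p_r}^n$, and a line with direction $b = (b^{(1)}, \ldots, b^{(r)})$ decomposes as a Cartesian product $L_1 \times \cdots \times L_r$, where each $L_i \subseteq \F_{p_i}^n$ is an affine line in direction $b^{(i)}$. Thus a Kakeya set $S$ must contain such a product line for every tuple of nonzero directions $(d^{(1)}, \ldots, d^{(r)}) \in \prod_i \P \F_{p_i}^{n-1}$. The plan is to prove Theorem~\ref{thm:SquarefreeNtheorem} by induction on $r$, with the base case $r=1$ being Theorem~\ref{thm:TightFqbound}.

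For the inductive step let $M = p_1 \cdots p_{r-1}$ and slice $S$ along its last factor: for each $\vec y \in \F_{p_r}^n$ set $S_{\vec y} = \{\vec x \in (\Z/M\Z)^n : (\vec x, \vec y) \in S\}$, so $|S| = \sum_{\vec y} |S_{\vec y}|$. Fix a direction $\bar d \in \P (\Z/M\Z)^{n-1}$; as $d^{(r)}$ varies over $\P \F_{p_r}^{n-1}$ the Kakeya property of $S$ supplies product lines whose $r$-th factors trace out a Kakeya set $C(\bar d) \subseteq \F_{p_r}^n$, which satisfies $|C(\bar d)| \geq p_r^n/(2 - 1/p_r)^n$ by Theorem~\ref{thm:TightFqbound}. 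By construction each $\vec y \in C(\bar d)$ has $S_{\vec y}$ containing an affine line in direction $\bar d$.

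To convert this per-direction guarantee into a bound on $|S|$, the natural route is to strengthen the inductive hypothesis to a partial-direction Kakeya bound: for any $T \subseteq (\Z/M\Z)^n$ containing a line in each direction of a subset $D \subseteq \P (\Z/M\Z)^{n-1}$,
\begin{equation*}
|T| \;\geq\; \frac{|D|}{|\P(\Z/M\Z)^{n-1}|} \cdot \frac{M^n}{\prod_{i < r}(2 - 1/p_i)^n}.
\end{equation*}
Applied to $T = S_{\vec y}$ with $D = \tilde D(\vec y) := \{\bar d : S_{\vec y} \text{ contains a line in direction } \bar d\}$ and summed over $\vec y$, together with the double-counting bound $\sum_{\vec y} |\tilde D(\vec y)| = \sum_{\bar d} |\{\vec y : \bar d \in \tilde D(\vec y)\}| \geq |\P(\Z/M\Z)^{n-1}| \cdot p_r^n/(2 - 1/p_r)^n$ from the previous paragraph, this telescopes to exactly $|S| \geq N^n/\prod_i (2 - 1/p_i)^n$.

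The main obstacle is proving the strengthened partial-direction bound; in the base case $r=1$ it asserts that a set containing a line in only a fraction $|D|/|\P \F_p^{n-1}|$ of the directions must still occupy at least that fraction of the tight Kakeya bound. I would try to establish it by adapting the method of multiplicities from \cite{dvir2013extensions}: instead of forcing the leading homogeneous part of the interpolating polynomial to vanish on all of $\P \F_p^{n-1}$, one only imposes vanishing on $D$, and balances the dimension of polynomials satisfying this codimension-$|D|$ linear condition against the vanishing-with-multiplicity conditions that the points of $T$ impose. Getting exactly the right linear dependence on $|D|$ so that the constants $(2 - 1/p_i)^n$ come out cleanly across the induction is the delicate step, and is where I expect the bulk of the technical work to lie.
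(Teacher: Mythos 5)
Your reduction — CRT splitting, slicing $S$ over $\F_{p_r}^n$, observing that for fixed $\bar d$ the $r$-th factors of the lines form a Kakeya set in $\F_{p_r}^n$, and double counting — is sound as far as it goes, and the telescoping arithmetic does produce the claimed constant. But the entire proof rests on the ``partial-direction Kakeya bound'' $|T| \geq \frac{|D|}{|\P(\Z/M\Z)^{n-1}|}\cdot\frac{M^n}{\prod_{i<r}(2-1/p_i)^n}$, which you do not prove, and this is a genuine gap rather than a routine technicality. The dependence on $|D|$ must be \emph{linear} for your double count to telescope, and no such statement is known to follow from the method of multiplicities. The obstruction is concrete: in the standard argument one interpolates $f$ vanishing to high order on $T$, deduces that the leading form $f_d$ vanishes to high order at every \emph{direction}, and then kills $f_d$ by applying Schwartz--Zippel with multiplicities over the full product set $\F_p^n$. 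When only a subset $D$ of directions is available, $f_d$ vanishes (with multiplicity) only on the cone over $D$, and there is no contradiction: $f_d$ may legitimately vanish on a proper subvariety containing that cone, so the vanishing conditions you propose to ``balance against'' the conditions from $T$ are far from independent and the dimension count does not close. The known statistical variants of the finite-field bound (in the reference you cite) place the direction-density in the \emph{base} of the $n$-th power, i.e.\ give roughly $(\delta q/2)^n$ rather than $\delta\cdot(q/2)^n$; since $\delta^n \le \delta$, feeding such a bound into your averaging loses a constant factor per prime that does not yield the stated theorem. A secondary issue: as written your induction does not close, since the step derives only the full bound for $r$ primes from the partial bound for $r-1$ primes, while the next step needs the partial bound for $r$ primes; this is repairable by averaging over fibers of $D$, but only again modulo the same unproven base-case lemma over $\F_p$.

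For contrast, the paper avoids needing any fractional-direction statement. It works with the line matrix $M_S$ over the single field $\F_{p_1}$, writes each row as a tensor product $\Ind_{L_1(b)}\otimes\Ind_{L_0(b)}$, and multiplies by a fixed Kronecker product $E\otimes I$ built from evaluation-with-derivatives maps (the method of multiplicities packaged as the decoding matrices $C^k_L$). The point that replaces your partial-direction lemma is Lemma~\ref{lem:rankSizeLine}: the inductive \emph{size} bound for Kakeya sets in $R_0^n$ implies a \emph{rank} bound for the family $\{\Ind_{L_0(c,b_0)}\}$ over any field, in particular over $\F_{p_1}$, and the tensor-rank lemma (Lemma~\ref{lem:tensorProductSpaceBound}) then multiplies the two contributions with no loss. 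If you want to salvage your slicing approach you would need to first establish the linear-in-$|D|$ fractional Kakeya bound over $\F_p$ with the constant $(2-1/p)^{-n}$, which appears to be an open strengthening of Theorem~\ref{thm:TightFqbound} in its own right.
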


Since the number of factors of $N$ satisfies $r = O( \log N / \log \log N)$ (indeed the asymptotics of $r$ are known~\cite{zbMATH02610444})  we see that the expression in the theorem is lower bounded by $N^{n - O(n/\log \log N)}$ and so it indeed proves Conjecture \ref{conj:kakeyaR}.

The tightness of  the bound in Theorem \ref{thm:SquarefreeNtheorem} can be demonstrated by taking the product (via the Chinese remainder theorem) of the best known constructions in $\F_{p_i}^n$ for each factor $p_i$. In \cite{saraf2008} it was shown  that, for any prime $p$, there are Kakeya sets in $\F_p^n$  of size bounded above by $p^n/2^{n-1} + Cp^{n-1}$, where $C$ is an absolute constant. Taking the product  one obtains the following.

\begin{thm}\label{thm:squarefreeUpper}
Let $N=p_1\hdots p_r$ be a product of distinct primes and $n$ an integer. There exist  a Kakeya Set $S\subseteq (\Z/N\Z)^n$ with 
$$|S| \leq \prod\limits_{i=1}^r \left(\frac{p_i^n}{2^{n-1}}+Cp_i^{n-1}\right),$$
where $C>1$ is an absolute constant.
\end{thm}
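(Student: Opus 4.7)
The plan is to build the Kakeya set in $(\Z/N\Z)^n$ directly as a product, under the Chinese Remainder Theorem isomorphism, of known small Kakeya sets in each coordinate factor $\F_{p_i}^n$. The definition of projective space given in the excerpt, namely
$$\P(\Z/N\Z)^{n-1} = \P\F_{p_1}^{n-1} \times \cdots \times \P\F_{p_r}^{n-1},$$
is precisely what makes this product construction compatible with the Kakeya property.

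Concretely, first I would invoke the result of \cite{saraf2008} to pick, for each prime $p_i$ dividing $N$, a Kakeya set $S_i \subset \F_{p_i}^n$ with
$$|S_i| \leq \frac{p_i^n}{2^{n-1}} + C\, p_i^{n-1}$$
for an absolute constant $C>1$. Identify $(\Z/N\Z)^n$ with $\F_{p_1}^n \times \cdots \times \F_{p_r}^n$ via CRT and define
$$S := S_1 \times \cdots \times S_r.$$
The size bound in the theorem follows immediately from $|S| = \prod_i |S_i|$.

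The content of the proof is then showing that $S$ is a Kakeya set in $(\Z/N\Z)^n$. Given a direction $b \in \P(\Z/N\Z)^{n-1}$, under CRT it corresponds to a tuple $(b_1,\ldots,b_r)$ with each $b_i \in \P \F_{p_i}^{n-1}$. For each $i$, since $S_i$ is a Kakeya set in $\F_{p_i}^n$, there exists $a_i \in \F_{p_i}^n$ such that the line $\{a_i + t_i b_i : t_i \in \F_{p_i}\}$ lies inside $S_i$. Let $a \in (\Z/N\Z)^n$ be the element whose CRT components are $(a_1,\ldots,a_r)$. For any $t \in \Z/N\Z$, its CRT components are $(t_1,\ldots,t_r) = (t \bmod p_i)_i$, and the CRT components of $a + tb$ are exactly $(a_i + t_i b_i)_i \in S_1 \times \cdots \times S_r = S$. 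As $t$ ranges over $\Z/N\Z$, the tuple $(t_1,\ldots,t_r)$ ranges over all of $\F_{p_1} \times \cdots \times \F_{p_r}$, so the entire line $\{a + tb : t \in \Z/N\Z\}$ sits in $S$.

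There is no real obstacle here; the only subtle point worth checking is that the notion of direction lines up correctly. In particular, one must verify that a valid direction $b \in \P(\Z/N\Z)^{n-1}$ in our sense really does project to a valid (nonzero, up to scaling) direction $b_i \in \P\F_{p_i}^{n-1}$ in each factor, which is exactly how the excerpt defines $\P(\Z/N\Z)^{n-1}$. Once this compatibility is noted, the product construction and the size estimate finish the proof.
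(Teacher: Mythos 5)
Your proposal is correct and is exactly the argument the paper has in mind: the paper simply states that one takes the CRT product of the Saraf--Sudan constructions in each $\F_{p_i}^n$, and your write-up supplies the (routine) verification that the product is a Kakeya set under the paper's definition of $\P(\Z/N\Z)^{n-1}$ as a product of the $\P\F_{p_i}^{n-1}$. No gaps; you have just made explicit the details the paper leaves to the reader.
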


Hence, when all the prime factors $p_i$ of $N$ are sufficiently large (also as a function of $r$), we see that the main term in the construction is off by at most a factor of $2^r = N^{o(1)}$ from the lower bound of Theorem \ref{thm:SquarefreeNtheorem}. Notice that, while the upper bound is obtained from a product of Kakeya sets modulo each prime factor, a general Kakeya set in $(\Z/N\Z)^n$ might not have this product structure (otherwise the proof of the lower bound would be trivial).

Our proof of Theorem \ref{thm:SquarefreeNtheorem} also outlines a specific problem whose solution could lead to a bound for general (non square-free) modulo $N$. Consider the $p^{kn} \times p^{kn}$ matrix $W_{p^k,n}$ whose rows/columns are indexed by elements of $R^n$ where $R = \Z/p^k\Z$ and whose $(x,y)$'th entry is $1$ if $\langle x,y\rangle=0 \mod p^k$ and $0$ otherwise. We call $W_{p^k,n}$ the {\em point-hyperplane incidence matrix} of $R^n$. Since $W_{p^k,n}$ has zero-one entries, we can view it as a matrix over any field and, in particular, compute its rank over $\F_p$. We show that this rank lower bounds the size of any Kakeya set.
\begin{thm}\label{thm:reduceRankpk}
Given a prime $p$ and integers $k,n$, every Kakeya set $S$ in $(\Z/p^k\Z)^n$ satisfies
\em{$$|S|\ge \text{rank}_{\F_p}(W_{p^k,n}).$$}
\end{thm}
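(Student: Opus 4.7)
I will show that the rows of $W_{p^k,n}$ indexed by elements of $S$ $\F_p$-span the entire row space of $W_{p^k,n}$; this yields $|S| \geq \text{rank}_{\F_p}(W_{p^k,n})$ immediately, since a matrix's row span has dimension at most the size of any spanning row set. Up to translating $S$ by $-s_0$ for some $s_0 \in S$ (which preserves both the Kakeya property and $|S|$), I assume $0 \in S$, so the all-ones row $W_{p^k,n}[0, \cdot] = \mathbf{1}$ is already in the span.

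For each direction $b \in R^n$ (i.e., $b \notin pR^n$), fix a Kakeya line $L_b = \{a + tb : t \in R\} \subseteq S$. The key computation is the sub-line sum identity: for any $t_0 \in R$ and any $i \in \{0, \ldots, k-1\}$,
$$\sum_{s \in \Z/p^{k-i}\Z} W_{p^k,n}\!\left[a + (t_0 + p^i s) b,\, y\right] \;\equiv\; \mathbf{1}[\langle b, y\rangle \in R^*] \cdot \mathbf{1}[\langle a + t_0 b,\, y\rangle \in p^i R] \pmod p.$$
This follows by counting the solutions $s \in \Z/p^{k-i}\Z$ to $\langle a + t_0 b, y\rangle + p^i s \langle b, y\rangle \equiv 0 \pmod{p^k}$: the count is $0$, a positive power of $p$ (vanishing mod $p$), or exactly $1$, the last case arising precisely when $\langle b, y\rangle$ is a unit (so there is a unique solution $t \in R$) and $\langle a + t_0 b, y\rangle \in p^i R$ (so this unique $t$ lies in the coset $t_0 + p^i R$). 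Since each summand on the left is a row of $W_{p^k,n}$ indexed by a point of $L_b \subseteq S$, every function
$$G_{a', b, i}(y) \;:=\; \mathbf{1}[\langle b, y\rangle \in R^*] \cdot \mathbf{1}[\langle a', y\rangle \in p^i R], \qquad a' \in L_b,\ i \in \{0, \ldots, k-1\},$$
lies in the desired row span.

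It remains to show that these $G$-functions, together with the all-ones row and the individual rows $W_{p^k,n}[s, \cdot]$ for $s \in S$, already span every row $W_{p^k,n}[x, \cdot]$. The base case is immediate: using $W_{p^k,n}[p^{k-1} b, y] = \mathbf{1}[\langle b, y\rangle \in pR] = 1 - \mathbf{1}[\langle b, y\rangle \in R^*]$, we obtain $W_{p^k,n}[p^{k-1} b, \cdot] = \mathbf{1} - G_{a', b, 0}$ for every direction $b$. An induction descending on the $p$-adic valuation then uses the $G$-functions with $i \geq 1$, together with the structural identity $W_{p^k,n}[p^{k-j} x, y] = \mathbf{1}[\langle x, y\rangle \in p^j R]$ for $x$ primitive, to recover $W_{p^k,n}[p^{k-j} b, \cdot]$ for each $j$; combining across directions produces every row $W_{p^k,n}[x, \cdot]$. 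The main obstacle is this inductive step: each $G_{a', b, i}$ is a \emph{product} of two indicators rather than a single row of $W_{p^k,n}$, so extracting individual rows requires carefully eliminating the unit-indicator factor $\mathbf{1}[\langle b, y\rangle \in R^*]$ via an inclusion--exclusion argument organized by the $p$-adic stratification of $R^n$, while ensuring the $a'$ required by the argument actually lies on the Kakeya line $L_b$ determined by $S$.
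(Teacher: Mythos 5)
Your sub-line sum identity is correct and is a genuine prime-power generalization of the paper's Lemma~\ref{lem:centeredHypAct}: writing $d=\langle b,y\rangle$ and $c=\langle a+t_0b,y\rangle$, the number of $s\in\Z/p^{k-i}\Z$ solving $c+p^isd\equiv 0\pmod{p^k}$ is exactly $1$ precisely when $d$ is a unit and $p^i\mid c$, and is otherwise $0$ or a positive power of $p$, so the congruence mod $p$ holds; and for $k=1$ your argument does close (there it is essentially the Section~\ref{sec:finitefield} proof in dual form). For $k\ge 2$, however, the proof is incomplete: the entire inductive step --- the part that would actually produce the rows $W_{p^k,n}[x,\cdot]=\mathbf{1}[\langle x,y\rangle\equiv 0\bmod p^k]$ for primitive $x$, which is where all the content lies --- is only sketched, and the obstacle you flag is real. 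Every function $G_{a',b,i}$ you can manufacture carries the factor $\mathbf{1}[\langle b,y\rangle\in R^*]$ and so is supported where $\langle b,y\rangle$ is a unit, whereas the row you must recover for $b=x$ is supported where $\langle x,y\rangle\equiv 0\bmod p^k$; thus the $G$'s built from the line in direction $x$ vanish identically on the support of the target row, and the $G$'s from other directions involve indicators $\mathbf{1}[\langle a_b+t_0b,y\rangle\in p^iR]$ whose base points $a_b$ are chosen adversarially by the Kakeya set and need bear no relation to $x$. No mechanism is given for eliminating the unit factor or for controlling $a'$, and it is not even clear that your underlying claim --- that the rows of $W_{p^k,n}$ indexed by $S$ span its entire $\F_p$-row space --- is true for $k\ge 2$; it is strictly stronger than the theorem and is not established by the paper either.

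The paper's proof takes a different route that sidesteps exactly this difficulty: it multiplies the line matrix $M_S$ by the complex character matrix $F_{i,j}=\gamma^{\langle i,j\rangle}$ with $\gamma$ a primitive $p^k$-th root of unity, notes that each row $\Ind_{L(b)}\cdot F$ is supported precisely on $\{j:\langle b,j\rangle=0\}$ with root-of-unity entries, and then transfers the resulting complex rank bound to an $\F_p$ rank bound on the $0/1$ support matrix via Lemma~\ref{lem:rankCtoF} (a nonzero minor over $\C$ with entries in $\{0,1,\gamma,\dots,\gamma^{p^k-1}\}$ forces a nonzero minor mod $p$ after specializing $\gamma\mapsto 1$, using the minimal polynomial of $\gamma$). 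That argument never needs to reconstruct individual rows of $W_{p^k,n}$ over $\F_p$. To salvage your approach you would need either to prove the spanning claim outright (a new and possibly false statement) or to find some rank inequality that does not require exhibiting every row --- which is essentially what the paper's complex detour accomplishes.
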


Hence, proving a lower bound of, say, $p^{(1-\eps)kn}$ for small $\eps>0$ on the rank of the incidence matrix $W_{p^k,n}$ would lead to new bounds for Kakeya sets in $(\Z/p^k\Z)^n$.  Furthermore, using our techniques, these bounds will then imply the appropriate bounds for $N$ which is a product of prime powers. The work of  \cite{goethalsDel1968,macwilliams1968p,smith1969p} shows such rank bounds hold for $W_{p,n}$ when $k=1$ or when $R$ a finite field. We will use these bounds to prove a slightly weaker bound leading up to Theorem \ref{thm:SquarefreeNtheorem}. Currently we are able to show that the rank of $W_{p^k,n}$ is only larger than $\approx p^{kn/2}$ which does not lead to any non-trivial bounds on the size of Kakeya sets. The matrix $W_{p^k,n}$, which is  referred to in the literature  as the incidence matrix of {\em Hjelmslev spaces} was shown to have full rank over the rational numbers \cite{LandjevV14} but the rank over $\F_p$ seems to be open. We note that our reduction is only in one direction -- showing that $W_{p^k,n}$ has low rank would not imply the existence of small Kakeya sets using our theorem.

\subsection{Overview of the proof}
Our proof consists of two main parts. The first gives a new formulation of the polynomial proof for Kakeya sets over finite fields (in our case, prime $N$). Our proof relies on the same underlying principles of the polynomial method but uses them in a way that gives us  more control. The second part uses this modified proof for general square-free $N$ by inducting on the number of prime factors. We now describe each part in more detail. 

Consider a Kakeya set  $S \subset \F_p^n$. The first new idea in the proof is to replace the size of the set $S$ with the rank of a $0-1$ matrix $M_S$ we call the {\em line-matrix} of $S$. This matrix has a row for each direction $b \in \P\F_p^{n-1}$ and that row is the indicator vector $\Ind_{L(b)}$ for a line $L(b) \subset S$ in direction $b$. That is,  each column of $M_S$ is indexed by some $x\in \F_p^n$ and the $b$'th row has ones in positions indexed by the points in $L(b)$ and zeros everywhere else. It is not hard to show (and proven in Lemma \ref{lem:rankSizeLine}) that, over any field, $$\rank(M_S) \leq |S| \leq p \cdot \rank(M_S)$$ and so the rank is a good proxy for $|S|$ (for the upper bound we require that $S$ is, in some sense, a minimal Kakeya set).  We will bound the rank of $M_S$ over $\F_p$ by constructing two fixed matrices (independent of $S$) which we denote for now by $A$ and $B$ such that $A$ has high rank and $A = M_S \cdot B$. Since the rank of $M_S \cdot B$ is at most the rank of $M_S$ we get that $|S| \geq \rank(A)$. We leave the description of the matrices $A$ and $B$ (which involves polynomials) to the technical sections as these are not needed to explain the second part of the proof. One comment is that the above outline is only enough to prove a slightly weaker version of Theorem \ref{thm:SquarefreeNtheorem} (with the ``right'' exponent $n$ but worse dependence on $r$). To prove the tighter bound as in the theorem we need to work with a variant of $M_S$ in which each line $L(b)$ has many rows associated with it, each supported on $L(b)$ but with different non-zero values. The construction of the fixed matrices $A$ and $B$ is also different and uses the extended variant of the polynomial method using high order derivatives (as in \cite{dvir2013extensions}). We present both the simplified and full proof in the technical sections below.

With the first part in place, we can now describe the case of composite $N$. For simplicity, assume $N = p \cdot q$ is a product of two primes and let $R = \Z/N\Z$. Notice that, by the Chinese remainder theorem, $R^n \cong \F_{p}^n \times \F_{q}^n$ and any line $L(b)$ in direction $b \in \P R^{n-1}$ is a Cartesian product of a line $L_p(b) \subset \F_{p}^n$ and a line $L_q(b) \subset \F_q^n$. Notice  that each of the lines $L_p(b), L_q(b)$ might depend on both the $b \text{ }(\text{mod } p)$ part and the $b \text{ }(\text{mod } q)$ part of the direction $b$ (otherwise our lives would be much easier as $S$ would be a product of a Kakeya set in $\F_{p}^n$ and a Kakeya set in $\F_{q}^n$). We construct the line matrix $M_S$ as before, working  over the field $\F_{p}$ and treating the rows of $M_S$ as elements in the tensor product $\F_{p}^{p^n} \otimes \F_{p}^{q^n}$. That is, each row is a function from $R^n $ to $\F_{p}$. From the above discussion,  each row $\Ind_{L(b)}$ is the tensor product of $\Ind_{L_p(b)}$ and $\Ind_{L_q(b)}$. We now recall the matrices  $A$ and $B$ from the first part so that, $A = M_{T} \cdot B$ for any Kakeya set $T \subset \F_{p}^n$.  The final step of the proof is multiplying $M_S$ by the Kronecker product $B \otimes I_{q^n}$, where $I_{q^n}$ is a $q^n \times q^n$ identity matrix and analysing the dimension of the space spanned by the rows. This requires both the rank bound on $A$ as well as the inductive bound on Kakeya sets over $\F_q^n$ (which imply rank bounds on the corresponding line-matrix over any field, including $\F_{p}$).

To prove Theorem~\ref{thm:reduceRankpk}  we would ideally like to construct a matrix $B$ such that $W_{p^k,n} = M_S \cdot B$ (which would prove the theorem by the above discussion). While we are not able to directly do that, we are able to construct a {\em complex} matrix $B$ such that $ M_S \cdot B$ has the same support as $W_{p^k,n}$ and whose non-zero entries  are all complex roots of unity of order $p^k$. We then show that the complex rank of such a matrix is lower bounded by the $\F_p$ rank of $W_{p^k,n}$.

\subsection{Organization}
In Section~\ref{sec:finitefield} we (re)prove the finite field Kakeya conjecture using the rank of the line matrix $M_S$. In Section \ref{sec:warmupSqfree} we show how to handle multiple prime factors by proving a weaker version of Theorem~\ref{thm:SquarefreeNtheorem} for the special case $N = pq$.
In Section~\ref{sec:sqfreegen} we prove Theorem \ref{thm:SquarefreeNtheorem} in full generality by adding the use of high order derivatives. In Section~\ref{sec:powConj}  we discuss the case of prime powers $R = \Z/p^k\Z$  and prove Theorem \ref{thm:reduceRankpk}.

\subsection{Acknowledgements}
The authors would like to thank Peter Sin and Ivan Landjev for helpful comments. Research supported by NSF grant DMS-1953807.

\section{Warm-up 1: Reproving the finite field bound }\label{sec:finitefield}

We start be defining the line-matrix $M_S$ associated with a Kakeya set $S \subset R^n$.

\begin{define}[Line matrix of $S$]\label{def:linematrix}
The line matrix $M_S$ for a Kakeya set $S$ in $R^n$ is a matrix with $0,1$ entries where the columns are indexed by points in $R^n$ and the rows are indexed by directions $b\in \P R^{n-1}$ and the row corresponding to $b$ is the indicator vector $\Ind_{L(b)}\in \{0,1\}^{|R^n|}$ of a line $L(b)$ in direction $b$ contained in $S$ (if there is more than one such line, we pick the first one in some pre-determined order). 
\end{define}

We note as $M_S$ is a matrix with $0,1$ entries we can treat it as a matrix over any field $\F$. Given any integer matrix $M$ we let $\textrm{rank}_\F(M)$ refer to the rank of the matrix $M$ over the field $\F$. 

\begin{lem}[Rank-Size relation]\label{lem:rankSizeLine}
Let $S\subseteq R^n$ be a Kakeya set and $\F$ a field. Then
$$ |S| \ge \rank_{\F}(M_S).$$
Furthermore, if $S'$ is the set of non-zero columns in $M_S$ (by identifying the columns by their indices in $R^n$ we see that $S'$ is itself also a Kakeya set in $R^n$) then
$$\rank_\F(M_S)\ge \frac{|S'|}{|R|}.$$
\end{lem}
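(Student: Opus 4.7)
The plan is to handle the two inequalities separately, since they come from quite different directions.

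For the upper bound $|S| \ge \rank_\F(M_S)$, I would first observe that every row of $M_S$ is the indicator of some line $L(b) \subseteq S$, so the union of the supports of all rows is contained in $S$. This means that every column indexed by a point outside $S$ is identically zero. Hence the number of non-zero columns of $M_S$ is at most $|S|$, and since $\rank_\F(M_S)$ is bounded above by the number of non-zero columns over any field, we conclude $\rank_\F(M_S) \le |S|$.

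For the ``furthermore'' part, the fact that $S'$ is a Kakeya set is immediate from unpacking the definition: the row indexed by $b$ has support exactly $L(b)$, so each point of $L(b)$ appears as the index of a non-zero column, forcing $L(b) \subseteq S'$ for every direction $b$. The content of the statement is the lower bound $\rank_\F(M_S) \ge |S'|/|R|$. My plan here is to pick a set $\mathcal{B}$ of $r := \rank_\F(M_S)$ rows forming a basis of the row space of $M_S$, and to argue that every column of $M_S$ is determined by its restriction to the rows in $\mathcal{B}$. Indeed, every other row is a linear combination of rows in $\mathcal{B}$, so the value of a column in any row is the corresponding linear combination of its values on $\mathcal{B}$; in particular, a column is non-zero if and only if its restriction to $\mathcal{B}$ is non-zero. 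Thus $|S'|$ is at most the number of columns containing at least one $1$ inside the basis rows.

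Next I would use the fact that each row of $M_S$ has exactly $|R|$ ones, since the line $L(b) = \{a+tb : t \in R\}$ has cardinality $|R|$: for $b \in \P R^{n-1}$ the annihilator of $b$ is trivial, because $b$ is non-zero modulo every prime factor of $N$, so distinct values of $t$ give distinct points. Consequently the total number of ones appearing in the $r$ basis rows is exactly $r \cdot |R|$, which upper bounds the number of columns with a non-zero entry in $\mathcal{B}$. Combining with the previous paragraph gives $|S'| \le r \cdot |R| = |R| \cdot \rank_\F(M_S)$, completing the proof. The only step that requires any care is verifying that lines really have $|R|$ points, which is where the specific ring-theoretic structure of $R$ (square-free modulus, directions non-zero modulo each prime) enters; the rest is a straightforward row-basis counting argument.
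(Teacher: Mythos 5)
Your proof of the first inequality matches the paper's: both observe that the rank is at most the number of non-zero columns, all of which are indexed by points of $S$. For the second inequality you take a genuinely different, dual route. The paper argues from below: it greedily selects lines $L_1,\ldots,L_r$ with each $L_{t+1}$ not contained in $\bigcup_{i\le t} L_i$, which is possible as long as $t|R| < |S'|$ because each line covers at most $|R|$ points of $S'$; the resulting indicator vectors are linearly independent by a triangularity argument, giving $\rank_\F(M_S)\ge \lceil |S'|/|R|\rceil$ directly. You instead argue from above: fixing a row basis $\mathcal{B}$ of size $r=\rank_\F(M_S)$, you note that a column vanishing on $\mathcal{B}$ vanishes in every row (each row being a linear combination of basis rows), so every non-zero column meets the support of some basis row, and since each row has at most $|R|$ non-zero entries this yields $|S'|\le r|R|$. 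Both arguments are correct and of comparable length; yours is marginally more general, as it uses only that each row has support of size at most $|R|$ and never needs to exhibit an explicit independent set, while the paper's greedy argument has the mild advantage of producing concrete independent rows. Your care in verifying that lines have exactly $|R|$ points is unnecessary for the bound --- the inequality $|L(b)|\le |R|$ is immediate from $L(b)=\{a+tb : t\in R\}$ and is all that is used --- though the verification itself is correct for the rings considered.
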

\begin{proof}
The lower bound on $|S|$ is trivial since all rows are supported on elements of $S$. To prove the other direction we iteratively pick lines from $S'$ as follows. We first start with a line $L_1$. After picking lines $L_1,L_2,\hdots, L_t$ we pick a line $L_{t+1}$ which is not completely contained in the union $\bigcup_{i=1}^t L_i$. The size of the union $\bigcup_{i=1}^t L_i$ is at most $|R|t$ so, since $S'$ is defined to be the union of the lines forming the rows of $M_S$, we can continue this procedure as long as $t|R|<|S'|$. This will gives us a set of lines $L_1,\hdots, L_r$ where $r=\lceil |S'|/|R| \rceil$ with the property that the line $L_{t+1}$ is not completely contained in $\bigcup_{i=1}^t L_i$ for all $t<|S'|/|R|$. The vectors $\Ind_{L_i}$ are clearly linearly independent since they correspond to an upper triangular matrix after changing the basis using an appropriate permutation matrix.
\end{proof}

From now on we will focus on giving a lower bound on the rank of $M_S$ for a Kakeya set $S \subset \F_p^n$ with $p$ prime. As outlined in the proof overview, we are looking to construct two matrices $A$ and $B$ so that $A = M_S \cdot B$ for any Kakeya set $S$ and such that $A$ has high rank. Both $A$ and $B$ will be related to the point-hyperplane incidence matrix which we now define. By a hyperplane we mean a subset $H_b \subset \F_p^n$ of the form $H_b = \{ a \in \F_p^n, \,|\, \langle b,a\rangle = 0 \}$.  We denote by $\overline{H}_b = \{ a \in \F_p^n, \,|\, \langle b,a\rangle \neq 0 \}$ the complement of the hyperplane.

\begin{define}[Point-hyperplane incidence matrix]\label{def:pointhyperplane}
Given a prime $p$ and a natural number $n$ we define the point-hyperplane incidence matrix $W_{p,n}$ to be the $p^n \times p^n$ matrix whose columns are the indicator vectors of the hyperplanes $\Ind_{H_b}$ over all $b \in \F_p^n$ and the rows are indexed by points in $\F_p^n$. Notice that each row/column of $W_{p,n}$ (except for the one indexed by zero) is repeated $p-1$ times as scaling by a non-zero field element does not affect whether or not the inner product is $0$.
\end{define}

Our proof will rely on the following  simple but useful property of this matrix.

\begin{lem}[Action of $W_{p,n}$ on lines]\label{lem:centeredHypAct}
 Let $L\subseteq \F_p^n$ be a line in direction $b\in \P\F_p^{n-1}$ and let $\Ind_L \in \F_p^{p^n}$ be its (row) indicator vector. Then, over the field $\F_p$, we have
 $$\Ind_L\cdot W_{p,n} = \Ind_{\overline{H}_b}.$$
 Hence, the product only depends on the {\em direction} of the line $L$.
\end{lem}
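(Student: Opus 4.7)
The plan is to compute the coordinates of $\Ind_L \cdot W_{p,n}$ directly from the definitions and use a case split based on whether the test direction $c$ is in $H_b$ or in $\overline{H}_b$. Write $L = \{a + tb : t \in \F_p\}$ for some anchor point $a \in \F_p^n$. Indexing rows of $W_{p,n}$ by points $x \in \F_p^n$ and columns by directions $c \in \F_p^n$, the definition gives $W_{p,n}[x,c] = 1$ iff $\langle c, x \rangle = 0$. Hence the $c$-th coordinate of the row vector $\Ind_L \cdot W_{p,n}$ equals
\[
(\Ind_L \cdot W_{p,n})[c] \;=\; \sum_{x \in L} \Ind[\langle c, x \rangle = 0] \;=\; \sum_{t \in \F_p} \Ind\bigl[\langle c, a\rangle + t \langle c, b\rangle = 0\bigr],
\]
where the $\Ind[\cdot]$ values are regarded as elements of $\F_p$.

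The next step is a case split on $\langle c, b\rangle$. If $\langle c, b\rangle \neq 0$, then the linear equation $\langle c, a\rangle + t\langle c, b\rangle = 0$ has exactly one solution $t \in \F_p$, so the sum equals $1$ in $\F_p$. If $\langle c, b\rangle = 0$, then either every $t$ satisfies the equation (when $\langle c, a\rangle = 0$) or none does, so the sum equals $p$ or $0$; either way it vanishes modulo $p$. Combining the two cases, the coordinate is $1$ exactly when $c \in \overline{H}_b$ and $0$ otherwise, which is precisely $\Ind_{\overline{H}_b}[c]$.

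The only subtle point, and really the whole reason the lemma holds, is the cancellation in the $\langle c, b\rangle = 0$ case: it relies crucially on working over $\F_p$ (so that $p \equiv 0$). Over any other characteristic the contribution from $c \in H_b$ with $\langle c, a \rangle = 0$ would not cancel, and the product would depend on the specific translate $a$ chosen for $L$. Once this observation is made, the proof reduces to a one-line counting argument, so I would write the lemma up as a short direct calculation with the case split highlighted.
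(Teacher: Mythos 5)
Your proof is correct and follows essentially the same route as the paper's: both compute the $c$-th coordinate as $|L \cap H_c| \bmod p$ and split on whether $\langle c,b\rangle$ vanishes, with your explicit parametrization $a+tb$ just making the paper's geometric intersection count into a solution count for a linear equation in $t$. Your version even absorbs the $c=0$ case (which the paper treats separately) into the $\langle c,b\rangle=0$ branch, which is a minor tidiness gain but not a different argument.
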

\begin{proof}
The coordinate of $\Ind_L\cdot W_{p,n}$ indexed by $a\in \F_p^n$ is the inner product of $\Ind_L$ and $\Ind_{H_a}$ and so is equal to the size (mod $p$) of the intersection $L \cap H_a$. For  $a=0$  the intersection size is $|L| = p$ which is zero mod $p$. This is also the $0$ entry in $\Ind_{\overline{H}_b}$ since $0 \not\in \overline{H}_b$. Now suppose $a \neq 0$. The size of the intersection of a line $L$ with a non-trivial hyperplane $H_a$ can have one of three values. If the direction $b$ of the line $L$ is not in $H_a$ then  $|L \cap H_a|=1$. If $b \in H_a$ then $|L \cap H_a|$ can be either 0 or $|L|$ which are both equal to 0 modulo $p$. Hence, over $\F_p$ we have $(\Ind_L \cdot W_{p,n})_a = (\Ind_{\overline{H}_b})_a$.
\end{proof}

We will also need a bound on the rank of the matrix $W_{p,n}$. These matrices have been studied in the coding community since the 1960's and their rank has been computed via several methods. In particular, the following is a special case of results appearing in ~\cite{goethalsDel1968,macwilliams1968p,smith1969p}. The proofs involve using studying the point-hyperplane incidence matrix for the projective space $\P \F_p^n$, identifying $\P \F_p^n$ with the cyclic group $\F_{p^{n+1}}^\times/\F_p^\times$, and using its representation theory over $\F_p$.

\begin{thm}[$\F_p$-rank of $W_{p,n}$]\label{thm:pRankSubs}
Let $W_{p,n}$ be the point-hyperplane incidence matrix of $\F_p^n$. Then
$$ \rank_{\F_p}(W_{p,n}) = \binom{p+n-2}{n-1}+1.$$
\end{thm}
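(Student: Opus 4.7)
The plan is to give a self-contained rank computation via Fermat's little theorem and a multinomial expansion, producing an explicit rank-$(K+1)$ factorization of $W_{p,n}$ over $\F_p$ and then verifying that no cancellation drops the rank further.

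First, I would use that for $x\in\F_p$ one has $x^{p-1}=1-[x=0]$ to rewrite each entry as a polynomial. Applying the multinomial theorem then yields
$$W_{p,n}[a,b] \;=\; 1-\langle a,b\rangle^{p-1} \;=\; 1-\sum_{|\alpha|=p-1}\binom{p-1}{\alpha}\,a^{\alpha}b^{\alpha}\pmod p.$$
Setting $K=\binom{p+n-2}{n-1}$, letting $V\in\F_p^{p^n\times K}$ be the matrix with $V[a,\alpha]=a^{\alpha}$, and $D$ the diagonal matrix with entries $\binom{p-1}{\alpha}$, this reads
$$W_{p,n} \;=\; \mathbf{1}\mathbf{1}^{T}-VDV^{T}\pmod p,$$
from which the upper bound $\rank_{\F_p}(W_{p,n})\le K+1$ is immediate.

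For the matching lower bound, I would read off the column span directly. The column at $b=0$ equals $\mathbf{1}$. Subtracting it from the column at any other $b$ yields $-\sum_{\alpha}\binom{p-1}{\alpha}b^{\alpha}\cdot v^{(\alpha)}$, where $v^{(\alpha)}\in\F_p^{p^n}$ is defined by $v^{(\alpha)}_a=a^{\alpha}$. Two elementary facts finish the argument: (i) each $\binom{p-1}{\alpha}$ is a unit in $\F_p$, because $|\alpha|=p-1$ forces $\alpha_i\le p-1<p$ so neither $(p-1)!$ nor any $\alpha_i!$ is divisible by $p$; and (ii) the monomials $\{b^{\alpha}:|\alpha|=p-1\}$ are linearly independent as functions on $\F_p^n$, since their coordinate-degrees are all below $p$. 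By (ii), as $b$ ranges over $\F_p^n$ the vectors $(b^{\alpha})_{\alpha}\in\F_p^{K}$ span $\F_p^{K}$; combined with (i), so do the vectors $\bigl(\binom{p-1}{\alpha}b^{\alpha}\bigr)_{\alpha}$. Hence each $v^{(\alpha)}$ lies in the column span of $W_{p,n}$, which together with $\mathbf{1}$ gives $K+1$ vectors in the column span. Applying (ii) once more (in the $a$ variable) shows these $K+1$ vectors are linearly independent, so $\rank_{\F_p}(W_{p,n})\ge K+1$.

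There is no real obstacle here: the argument is a clean piece of linear algebra. The only subtle point to keep track of is that the multinomial coefficients $\binom{p-1}{\alpha}$ must all be nonzero mod $p$, which is automatic from $|\alpha|=p-1<p$. This approach bypasses the representation-theoretic machinery of the cited references entirely, replacing it with the observation that $1-y^{p-1}$ is the $\F_p$-indicator of $y=0$.
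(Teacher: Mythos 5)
Your proof is correct, and it takes a genuinely different route: the paper does not prove Theorem~\ref{thm:pRankSubs} at all, but imports it from the coding-theory literature, where the computation proceeds by identifying $\P\F_p^{n-1}$ with the cyclic group $\F_{p^n}^\times/\F_p^\times$ and analysing the incidence matrix through its representation theory over $\F_p$. Your argument replaces that machinery with the elementary identity $W_{p,n}[a,b]=1-\langle a,b\rangle^{p-1}$ and the multinomial expansion, giving the explicit factorization $W_{p,n}=\mathbf{1}\mathbf{1}^{T}-VDV^{T}$ over $\F_p$ with $V$ of width $K=\binom{p+n-2}{n-1}$; the upper bound is then immediate, and your lower bound is sound: the $b=0$ column is $\mathbf{1}$, the multinomial coefficients $\binom{p-1}{\alpha}$ are units since $|\alpha|=p-1<p$, the monomials of coordinate-degree below $p$ are linearly independent as functions on $\F_p^n$ (so the coefficient vectors $\bigl(\binom{p-1}{\alpha}b^{\alpha}\bigr)_{\alpha}$ span $\F_p^{K}$ and each $v^{(\alpha)}$ lands in the column space), and the $K+1$ monomial functions $\{1\}\cup\{a^{\alpha}\}_{|\alpha|=p-1}$ are distinct and independent. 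What each approach buys: yours is short, self-contained, and makes the paper's later polynomial-method reinterpretation of $W_{p,n}$ (Section~\ref{sec:sqfreegen}, where evaluation maps of degree-$(kp-1)$ homogeneous polynomials play exactly the role of your matrix $V$) completely transparent; the cited representation-theoretic results are more general, covering incidences between points and flats of arbitrary dimension over prime-power fields $\F_q$, where the $p$-rank is governed by Hamada-type formulas and no comparably simple factorization is available. For the prime-field case the paper actually needs, your argument would suffice and could replace the citation.
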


We will conclude this section by demonstrating how the information obtained so far for $W_{p,n}$ can be used to give a bound on Kakeya sets over prime order finite fields.\footnote{One can generalize the proof using $W_{p,n}$ also to fields of  size $p^t$ but the resulting  bounds are not as good as the ones obtained directly from the polynomial methods.} 

\begin{thm}\label{thm:oldnewproof}
Let $S \subset \F_p^n$ be a Kakeya set. Then
$$|S|\geq \binom{p+n-2}{n-1}.$$
\end{thm}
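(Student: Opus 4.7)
The plan is to exhibit a universal matrix $A$, independent of $S$, such that $A = M_S \cdot W_{p,n}$ over $\F_p$, and then to lower bound $\rank_{\F_p}(A)$ using Theorem~\ref{thm:pRankSubs}. By Lemma~\ref{lem:rankSizeLine} we have $|S|\ge \rank_{\F_p}(M_S)$, and since rank is submultiplicative, $\rank_{\F_p}(M_S)\ge \rank_{\F_p}(M_S\cdot W_{p,n})=\rank_{\F_p}(A)$, so any lower bound on $\rank_{\F_p}(A)$ transfers to $|S|$.

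The first step is to identify $A$. The row of $M_S$ indexed by direction $b\in\P\F_p^{n-1}$ is $\Ind_{L(b)}$, so by Lemma~\ref{lem:centeredHypAct} the $b$-th row of $M_S\cdot W_{p,n}$ is $\Ind_{\overline{H}_b}$, which depends only on $b$ and not on the particular line chosen in $S$. Thus $A$ is the matrix whose rows are $\Ind_{\overline{H}_b}$ for $b\in\P\F_p^{n-1}$, a fixed matrix depending only on $p$ and $n$.

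The main step is to show that $\rank_{\F_p}(A)\ge \binom{p+n-2}{n-1}$. Here is how I would do it. Enlarge $A$ to the matrix $A'$ whose rows are $\Ind_{\overline{H}_b}$ for every $b\in\F_p^n$; since $H_{\lambda b}=H_b$ for $\lambda\neq 0$ and $\Ind_{\overline{H}_0}=0$, we have $\rank_{\F_p}(A')=\rank_{\F_p}(A)$. Writing $\Ind_{\overline{H}_b}=\mathbf{1}-\Ind_{H_b}$ identifies the row space of $A'$ with a subspace of the row space of $W_{p,n}^{\,T}$ translated by $\mathbf{1}$. Concretely, each row of $W_{p,n}^{\,T}$ (namely $\Ind_{H_b}$) lies in $\mathbf{1}\cdot\F_p+\mathrm{rowsp}(A')$, so
\[
\rank_{\F_p}(W_{p,n})=\rank_{\F_p}(W_{p,n}^{\,T})\le \rank_{\F_p}(A')+1.
\]
Combining with Theorem~\ref{thm:pRankSubs} gives $\rank_{\F_p}(A)\ge \binom{p+n-2}{n-1}+1-1=\binom{p+n-2}{n-1}$, which is what we need.

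The only step requiring any care is verifying the ``$-1$'' bookkeeping between the row spans of $W_{p,n}^{\,T}$ and of $A'$, and checking that the all-ones vector $\mathbf{1}=\Ind_{H_0}$ already lies in the row space of $W_{p,n}^{\,T}$; everything else is a routine combination of Lemmas~\ref{lem:rankSizeLine} and \ref{lem:centeredHypAct} with Theorem~\ref{thm:pRankSubs}. I do not expect any genuine obstacle: the whole point of the reformulation is that the heavy lifting (the polynomial-method rank computation for $W_{p,n}$) has been outsourced to the cited result of \cite{goethalsDel1968,macwilliams1968p,smith1969p}, and the remainder is linear algebra.
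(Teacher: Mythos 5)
Your proposal is correct and follows essentially the same route as the paper: form $A = M_S\cdot W_{p,n}$, identify its rows as $\Ind_{\overline{H}_b}$ via Lemma~\ref{lem:centeredHypAct}, and recover $\rank_{\F_p}(A)\ge\rank_{\F_p}(W_{p,n})-1$ from the relation $\Ind_{\overline{H}_b}=\mathbf{1}-\Ind_{H_b}$ (the paper phrases this as $A'=J-A$ having the same rows as $W_{p,n}$ with $J$ of rank one). The only cosmetic difference is your row-span bookkeeping versus the paper's matrix subtraction, and your extra check that $\mathbf{1}$ lies in the row space of $W_{p,n}$ is not actually needed for the direction of the inequality you use.
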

\begin{proof}
Working over the field $\F_p$, let $M_S$ be the line matrix of $S$ and let $$ A = M_S \cdot W_{p,n}.$$
By Lemma~\ref{lem:centeredHypAct} we have that the row of $A$ indexed by $b \in \P \F_p^{n-1}$ is the indicator vector $\Ind_{\overline{H}_b}$.  Let $A' = J - A$  where $J$ is a matrix with all entries equal to one. Then, the matrix $A'$ has the same rows (without repetition) as those of $W_{p,n}$ and hence the same rank. Since $J$ is rank one, we get that $$ \rank_{\F_p}(A) \geq \rank_{\F_p}(W_{p,n}) - 1 = \binom{p+n-2}{n-1}. $$ Since $$|S| \geq \rank_{\F_p}(M_S) \geq \rank_{\F_p}(M_S \cdot W_{p,n})$$ we get the claimed bound.
\end{proof}

The relationship between the above proof and the polynomial method proof appearing in \cite{dvir2009size} is somewhat elusive at this point and will become clearer when we prove the stronger bound appearing in  Theorem~\ref{thm:SquarefreeNtheorem}. However, the proof of this section could be taken on its own as a `new' proof of the finite field bound which does not use polynomials in any explicit way (but, in turn, relies on the rank bound of Theorem~\ref{thm:pRankSubs}). The bound obtained above only gives an exponent of $n-1$ instead of $n$. However, this can be amplified to $n-\eps$ for all $\eps$ using a standard tensoring trick (see Lemma~\ref{lem:productkakeya}).
\section{Warm-up 2: Product of two primes}\label{sec:warmupSqfree}
In this section we show how the proof of the finite field case given in the previous section allows us to work with composite (square-free) modulus. For the sake of simplicity we deal with the case of two distinct primes as it already contains all the technical details of the general case. 

First, we define the Kronecker Product of two matrices and its relation to the tensor product. We denote $[r] = \{1,2,\ldots,r\}$.

\begin{define}[Kronecker Product of two matrices]
Given a field $\F$ and two matrices $M_A$ and $M_B$ of sizes $n_1\times m_1$ and $n_2\times m_2$ corresponding to linear maps $A:\F^{n_1}\rightarrow \F^{m_1}$ and $B:\F^{n_2}\rightarrow \F^{m_2}$ respectively, we define the Kronecker product $M_A\otimes M_B$ as a matrix of size $n_1n_2\times m_1m_2$ with its rows indexed by elements in $[n_1]\times [n_2]$ and its columns indexed by elements in $[m_1]\times [m_2]$ such that
$$M_A\otimes M_B ((r_1,r_2),(c_1,c_2))=M_A(r_1,c_1)M_B(r_2,c_2),$$
where $r_1\in [n_1],r_2\in [n_1],c_1\in [m_1]$ and $c_2\in [m_2]$. $M_A\otimes M_B$ corresponds to the matrix of the linear map $A\otimes B: \F^{n_1}\otimes \F^{n_2}\cong \F^{n_1n_2}\rightarrow \F^{m_1}\otimes \F^{m_2}\cong \F^{m_1m_2}$.
\end{define}

We will need the following simple property of Kronecker products which follows from the corresponding property of the tensor product of linear maps.

\begin{fact}[Multiplication of Kronecker products]\label{fact:multOfKronecker}
Given matrices $A_1,A_2,B_1$ and $B_2$ of sizes $a_1\times n_1$, $a_2\times n_2$, $n_1\times b_1$ and $n_2\times b_2$ we have the following identity,
$$(A_1 \otimes A_2) \cdot (B_1\otimes B_2)=(A_1\cdot B_1)\otimes (A_2\cdot B_2).$$
\end{fact}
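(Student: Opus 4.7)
The plan is to verify the identity in two complementary ways, either of which can be turned into a complete proof. The cleanest route uses the linear-map interpretation already recorded in the definition: a Kronecker product of matrices represents the tensor product of the corresponding linear maps on the standard tensor product of vector spaces. So I would first recall that $A_1 \otimes A_2 \colon \F^{n_1} \otimes \F^{n_2} \to \F^{a_1} \otimes \F^{a_2}$ is characterized (by universality) as the unique linear map sending a pure tensor $v_1 \otimes v_2$ to $(A_1 v_1) \otimes (A_2 v_2)$, and analogously for $B_1 \otimes B_2$. Composition then gives, on pure tensors,
\[
(A_1 \otimes A_2)\bigl((B_1 \otimes B_2)(v_1 \otimes v_2)\bigr) = (A_1 \otimes A_2)(B_1 v_1 \otimes B_2 v_2) = (A_1 B_1 v_1) \otimes (A_2 B_2 v_2),
\]
which is exactly the action of $(A_1 B_1) \otimes (A_2 B_2)$ on $v_1 \otimes v_2$. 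Since pure tensors span $\F^{b_1} \otimes \F^{b_2}$ and linear maps agreeing on a spanning set are equal, the identity holds at the level of linear maps, and translating back via the chosen bases gives the matrix identity.

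For readers who prefer a direct computation, the second approach is to compare the two sides entry by entry. Indexing rows by $(r_1, r_2) \in [a_1] \times [a_2]$ and columns by $(c_1, c_2) \in [b_1] \times [b_2]$, the definition of matrix multiplication gives
\[
\bigl((A_1 \otimes A_2)(B_1 \otimes B_2)\bigr)\bigl((r_1,r_2),(c_1,c_2)\bigr) = \sum_{(k_1,k_2) \in [n_1] \times [n_2]} A_1(r_1,k_1) A_2(r_2,k_2) B_1(k_1,c_1) B_2(k_2,c_2),
\]
and the double sum factors as
\[
\left(\sum_{k_1} A_1(r_1,k_1) B_1(k_1,c_1)\right)\left(\sum_{k_2} A_2(r_2,k_2) B_2(k_2,c_2)\right) = (A_1 B_1)(r_1,c_1)\,(A_2 B_2)(r_2,c_2),
\]
which is the $((r_1,r_2),(c_1,c_2))$ entry of $(A_1 B_1) \otimes (A_2 B_2)$ by definition.

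There is essentially no obstacle here: the main things to be careful about are that the inner dimensions match (so the products $A_i B_i$ are defined and the products $(A_1 \otimes A_2)(B_1 \otimes B_2)$ make sense with the stated sizes) and that the index sets used for the Kronecker product indexing are consistent with those in Definition 3.1. Both approaches are routine, so I would include only the entry-wise calculation or a one-line appeal to the tensor-product universal property, depending on the level of detail desired.
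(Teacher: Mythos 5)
Your proposal is correct; the paper gives no proof at all, stating only that the fact ``follows from the corresponding property of the tensor product of linear maps,'' which is precisely your first argument. Both of your routes (the universal-property argument and the entry-wise computation) are valid, and either would serve as a complete justification of this standard identity.
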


For the rest of this section, let $N = pq$ be a product of distinct primes and denote $R = \Z/N\Z$.  Recall that, via the Chinese remainder theorem, we have a natural isomorphism between $R$ and $\F_p \times \F_q$ which extends to $R^n \cong \F_p^n \times \F_q^n$. We will work in the tensor product $\F_p^{p^n} \otimes \F_p^{q^n}$ which we will identify with the space $\F_p^{N^n}$. If we consider $v \in \F_p^{p^n}$ as a function $v : \F_p^n \mapsto \F_p$ and $u \in \F_p^{q^n}$ as a function $v : \F_q^n \mapsto \F_p$ then their tensor product $v \otimes u \in \F_p^{N^n}$ is the function $v \otimes u : R^n \mapsto \F_p$ defined by $(v \otimes u)(x_p,x_q) = v(x_p) \cdot u(x_q)$ where $(x_p,x_q) \in \F_p^n \times \F_q^n$ is a general element in $R^n$ via the Chinese remainder theorem.

We will need the following simple lemma on the rank of certain sets of vectors inside the tensored space. 

\begin{lem}\label{lem:tensorProductSpaceBound}
Let $V$ and $U$ be finite dimensional vector spaces over an arbitrary  field $\F$. Let $A=\{v_1,v_2,\hdots,v_n\} \subseteq V$ be a set of linearly independent vectors and $B_1,B_2,\hdots,B_n \subseteq U$ be subsets such that each  $B_i$ spans a subspace of dimension at least $k$. Then the space spanned by the vectors $\bigcup_{i=1}^n\{v_i\otimes y| y\in B_i\}$ has dimension at least $nk$ in $V\otimes U$
\end{lem}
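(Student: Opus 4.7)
The plan is to exploit the linear independence of the $v_i$'s to get a direct sum decomposition of $V \otimes U$ that isolates each of the spans $\mathrm{span}\{v_i \otimes y : y \in B_i\}$ into its own summand.

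First, I would extend the linearly independent set $\{v_1,\ldots,v_n\}$ to a basis $\{v_1,\ldots,v_m\}$ of $V$. Then the standard basis of $V \otimes U$ construction gives the direct sum decomposition
\[
V \otimes U \;=\; \bigoplus_{j=1}^m \bigl(v_j \otimes U\bigr),
\]
where each summand $v_j \otimes U := \{v_j \otimes u : u \in U\}$ is a subspace isomorphic to $U$ via the map $u \mapsto v_j \otimes u$ (injectivity follows because $v_j \neq 0$).

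Next, for each $i \in [n]$, let $W_i = \mathrm{span}(B_i) \subseteq U$, which has dimension at least $k$ by hypothesis. Define $S_i = \mathrm{span}\{v_i \otimes y : y \in B_i\}$. Since the map $u \mapsto v_i \otimes u$ is linear and injective, we get $S_i = v_i \otimes W_i$ and $\dim(S_i) = \dim(W_i) \geq k$. In particular, $S_i \subseteq v_i \otimes U$, which is the $i$-th summand in the decomposition above.

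Finally, because the $S_i$ sit inside distinct summands of an internal direct sum, the sum $\sum_{i=1}^n S_i$ is itself direct, so
\[
\dim\Bigl(\mathrm{span}\bigcup_{i=1}^n \{v_i \otimes y : y \in B_i\}\Bigr) \;=\; \sum_{i=1}^n \dim(S_i) \;\geq\; nk.
\]
There is no real obstacle here; the only thing to be careful about is the justification that tensoring with a fixed nonzero $v_i$ is injective on $U$ and lands in a summand disjoint (except at $0$) from the other $v_j \otimes U$, both of which are immediate from choosing a basis of $V$ containing the $v_i$'s.
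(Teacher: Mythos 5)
Your proof is correct and is essentially the same argument as the paper's: the paper's projection operator $P_i$ (defined on $\mathrm{span}\{v_i\}\otimes U$ via a basis of $U$) is exactly the projection onto the summand $v_i\otimes U$ in your direct sum decomposition, so both proofs isolate each $S_i$ in its own component and add up dimensions. No gap; your phrasing via the decomposition $V\otimes U=\bigoplus_j (v_j\otimes U)$ is just a slightly more packaged version of the same idea.
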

\begin{proof}
Let $C=\{c_1,\hdots,c_m\}$ be some basis of $U$ where we let $m$ be the dimension of $U$. Then $v_i\otimes c_j$ where $i\in [n]$ and $j\in [m]$ form a linearly independent set of vectors in $V\otimes U$. For each set $B_i$ we can find a set of $k$ linearly independent vectors $B'_i=\{b^i_1,\hdots,b^i_k\}$ all of which can be written as a linear combination of elements in $C$.

We will show the set of vectors $\bigcup_{i=1}^n\{v_i\otimes y| y\in B'_i\}$ is linearly independent to prove the lemma. Let us consider a linear combination of these vectors which equals $0$,
$$\sum_{i=1}^n \sum_{j=1}^k \alpha_{i,j} v_i\otimes b^i_j=0,$$
where $\alpha_{i,j}$ are scalars. This means,
\begin{align}
\sum\limits_{i=1}^n v_i\otimes \left(\sum_{j=1}^k \alpha_{i,j}  b^i_j\right)=0. \label{eq:tensorProductLemma}
\end{align}

Consider the linear projection operator $P_i$ defined on $\textrm{span}\{v_i, i \in [n]\} \otimes U$ as,
$$P_i(v_j\otimes c_k) =
\left\{
	\begin{array}{ll}
		c_k  & \mbox{if } j = i \\
		0 & \mbox{if } j \ne i
	\end{array}
\right. .$$
Applying $P_i$ on \eqref{eq:tensorProductLemma} gives us,
$$ \sum_{j=1}^k \alpha_{i,j} b^i_j=0.$$
As $b^i_1,\hdots,b^i_k$ are linearly independent we have $\alpha_{i,j}=0$ for all $i$ and $j$.
\end{proof}

We will also need the following simple claim which allows one to amplify a bound of the form $N^{n - c}$ for some constant $c$ to a bound of the form $N^{n-\eps}$ for any $\eps > 0$. 

\begin{lem}\label{lem:productkakeya}
If $S$ is a Kakeya set in $R^n$ where $R=\Z/N\Z$ for square-free $N$, then $S^t\subseteq R^{tn}$ which is the product of $S$ with itself $t$ times is also a Kakeya set in $R^{tn}$.
\end{lem}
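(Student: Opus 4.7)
The plan is to show that, given any direction $b \in \P R^{tn-1}$, we can find an $a \in R^{tn}$ such that the line $\{a + sb : s \in R\}$ is contained in $S^t$. Writing $b = (b_1, \ldots, b_t)$ and $a = (a_1, \ldots, a_t)$ with each $b_j, a_j \in R^n$, the condition that the line is contained in $S^t$ is equivalent to requiring, for each $j \in [t]$, that $\{a_j + s b_j : s \in R\} \subseteq S$. So the task reduces to choosing the block shifts $a_j$ one at a time.

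The subtlety is that although $b$ is a valid direction in $\P R^{tn-1}$ (i.e.\ non-zero modulo every $p_i$), an individual block $b_j$ need not lie in $\P R^{n-1}$: it could be zero modulo some nonempty strict subset of the primes $p_i$. The fix is a ``completion'' trick based on the Chinese remainder decomposition $R^n \cong \prod_{i=1}^r \F_{p_i}^n$. For each $j$, let $I_j = \{i : b_j \not\equiv 0 \bmod p_i\}$ and define $b'_j \in R^n$ by taking its $\F_{p_i}^n$-component equal to that of $b_j$ when $i \in I_j$, and equal to an arbitrary non-zero vector in $\F_{p_i}^n$ when $i \notin I_j$. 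Then $b'_j$ is non-zero modulo every $p_i$, hence is a bona fide direction in $\P R^{n-1}$. Furthermore, applying CRT coordinate-by-coordinate, the set $\{a_j + s b_j : s \in R\}$ in the $i$-th factor is either the same line as that of $b'_j$ (when $i \in I_j$) or the single point $\{a_{j,p_i}\}$, which is contained in the line $\{a_{j,p_i} + s_i b'_{j,p_i} : s_i \in \F_{p_i}\}$. Therefore $\{a_j + s b_j : s \in R\} \subseteq \{a_j + s b'_j : s \in R\}$ for every choice of $a_j$.

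Now apply the Kakeya property of $S$ to each valid direction $b'_j$: this produces $a_j \in R^n$ with $\{a_j + s b'_j : s \in R\} \subseteq S$. Setting $a = (a_1, \ldots, a_t)$ and combining the chain of inclusions over $j \in [t]$ gives $\{a + s b : s \in R\} \subseteq S^t$, as required. There is no real obstacle here; the only point that needs care is the mismatch between $b$ being a valid direction in the large projective space and its blocks $b_j$ possibly failing to be valid directions in the smaller one, which is precisely where the square-free CRT structure enters and why the completion step is needed.
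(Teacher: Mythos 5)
Your proof is correct and follows essentially the same route as the paper's: both decompose the direction into blocks, observe that a block may vanish modulo some primes, repair it by choosing a genuine direction in $\P R^{n-1}$ agreeing with the block modulo the primes where it is non-zero, and then check via CRT that the degenerate line is contained in the honest line (your choice of $s_i=0$ in the degenerate factors is exactly the paper's choice of $t'\equiv 0 \bmod p_i$). The only cosmetic difference is that you treat general $t$ uniformly with the completion trick, while the paper reduces to $t=2$ and splits into an easy and a delicate case.
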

\begin{proof}
It is enough to consider the case $t=2$. Let $b \in \P R^{2n-1}$ be some direction and, by abuse of notation, let us think of $b \in R^{2n}$ as some representative of this direction. Write $b = (b',b'')$ where $b'$ is the first $n$ coordinates of $b$ and $b''$ are the last $n$ coordinates (each corresponding to a different copy of $S$). If $N = p_1 \cdots p_r$, for each $i$ we let $b'_i = b' \mod p_i$ and similarly for $b''$. If all $b'_i$ and $b''_i$ are non-zero then we are in a situation where $b'$ and $b''$ are `legal' directions in $\P R^{n-1}$ and so there will be lines $L',L''$ in $S$ in these directions. Therefore the product $L' \times L'' \subseteq S^2$ will contain a line in direction $b$. 

A slightly more delicate case occurs when some of the $b'_i$ or $b''_i$ are zero. In this case, let $L' \subset S$ be a line in some direction $c'$ that agrees with $b'$ modulo all $p_i$ for which $b'_i$ is non zero and take $L'', c''$ in a similar manner. We now have to check that the product $L' \times L''$ contains a line in direction $b = (b',b'')$. Suppose $L' = \{a' + tc' \,|\, t \in R\}$ and similarly $L'' = \{a'' + tc'' \,|\, t \in R\}$. Consider the line in $R^{2n}$ in direction $b = (b',b'')$ through $a = (a',a'')$. A general point on this line looks like $x(t) = (a' + tb',a'' + tb'')$. Now, the set $L' \times L''$ contains all points of the form $y(t',t'') = (a' + t'c',a'' + t'' c'')$. We now have to check that the point $x(t)$ as above is in this product. Let $t' \in R$ be the same as $t$ but with $t' =0  \mod p_i$ for all $p_i$ such that $b'_i=0$ and similarly let $t''$ be the same as $t$ but with $t''=0 \mod p_i$ for all $p_i$ such that $b''_i=0$. We have $t'c' = tb'$ and similarly $t''c'' = tb''$. Therefore, $x(t) = y(t',t'')$ and we are done.
\end{proof}

We are now ready to prove the main result of this section.

\begin{thm}[Kakeya bound in $(\Z/pq\Z)^n$]
Let $p$ and $q$ be distinct primes and let $S \subset (\Z/pq\Z)^n$ be a Kakeya set. Then, for any $\eps >0$ there exists a constant $C_{n,\eps}$ depending only on $n$ and $\eps$ so that 
$$|S|\ge C_{n,\eps} \cdot (pq)^{n-\eps}.$$ 
\end{thm}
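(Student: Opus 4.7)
The plan is to follow the blueprint outlined in Section~\ref{sec:warmupSqfree}: combine the finite-field argument of Theorem~\ref{thm:oldnewproof} with the Chinese Remainder Theorem (CRT) tensor decomposition $R^n \cong \F_p^n \times \F_q^n$, and then amplify the resulting bound via Lemma~\ref{lem:productkakeya} to push the exponent up to $n-\eps$.

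First, I would set up the tensor structure. For $b \in \P R^{n-1}$ write $b=(b_p,b_q)$ under CRT; the chosen line $L(b)\subseteq S$ factors as $L(b)=L_p(b)\times L_q(b)$, where $L_p(b)\subseteq \F_p^n$ has direction $b_p$ and $L_q(b)\subseteq \F_q^n$ has direction $b_q$. Identifying $\F_p^{N^n}\cong \F_p^{p^n}\otimes \F_p^{q^n}$, the row of $M_S$ indexed by $b$ is $\Ind_{L_p(b)}\otimes \Ind_{L_q(b)}$, and by Lemma~\ref{lem:rankSizeLine} we have $|S|\ge \rank_{\F_p}(M_S)$.

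Next, I would act on the $p$-factor using the point--hyperplane matrix. Set $M':=M_S\cdot (W_{p,n}\otimes I_{q^n})$. By Fact~\ref{fact:multOfKronecker} and Lemma~\ref{lem:centeredHypAct}, its row indexed by $b$ equals $\Ind_{\overline{H}_{b_p}}\otimes \Ind_{L_q(b)}$, and since multiplication cannot increase rank, $|S|\ge \rank_{\F_p}(M')$. I would lower-bound $\rank_{\F_p}(M')$ by Lemma~\ref{lem:tensorProductSpaceBound}. By the argument inside the proof of Theorem~\ref{thm:oldnewproof} (using Theorem~\ref{thm:pRankSubs}), the vectors $\{\Ind_{\overline{H}_{b_p}} : b_p\in \P\F_p^{n-1}\}$ span a subspace of dimension at least $D:=\binom{p+n-2}{n-1}$; fix a basis $v_i=\Ind_{\overline{H}_{b_p^{(i)}}}$ for $i=1,\ldots,D$. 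For each $i$, set $B_i=\{\Ind_{L_q(b)}:b=(b_p^{(i)},b_q),\, b_q\in\P\F_q^{n-1}\}$. As $b_q$ ranges over all directions in $\F_q^n$, the lines $L_q(b)$ cover a Kakeya set $T_i\subseteq \F_q^n$, so by Lemma~\ref{lem:rankSizeLine} (applied over $\F_p$) together with Theorem~\ref{thm:oldnewproof},
$$\dim\,\spn(B_i)\;\ge\;\frac{|T_i|}{q}\;\ge\;\frac{1}{q}\binom{q+n-2}{n-1}.$$
Lemma~\ref{lem:tensorProductSpaceBound} then yields
$$|S|\;\ge\;\rank_{\F_p}(M')\;\ge\;\binom{p+n-2}{n-1}\cdot\frac{1}{q}\binom{q+n-2}{n-1}\;\ge\;c_n\,p^{\,n-1}q^{\,n-2}$$
for some constant $c_n>0$ depending only on $n$.

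Finally, I would amplify using Lemma~\ref{lem:productkakeya}: $S^t\subseteq R^{tn}$ is Kakeya, so applying the previous step in dimension $tn$ gives $|S|^t=|S^t|\ge c_{tn}\,p^{\,tn-1}q^{\,tn-2}$, hence $|S|\ge c_{tn}^{1/t}\,p^{\,n-1/t}\,q^{\,n-2/t}$. Choosing $t$ with $2/t<\eps$ produces $|S|\ge C_{n,\eps}(pq)^{n-\eps}$, with $C_{n,\eps}=c_{tn}^{1/t}$. The main obstacle is the second step: although each $B_i$ lives in the indicator vectors of lines in a Kakeya subset of $\F_q^n$, we are computing rank over $\F_p$, so the only available rank bound is the field-independent $|T_i|/q$ from Lemma~\ref{lem:rankSizeLine}, which loses a factor of $q$ and produces the asymmetric base bound $p^{n-1}q^{n-2}$. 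It is essential that the amplification step absorbs this $1/q$ loss into the arbitrarily small $\eps$ in the exponent.
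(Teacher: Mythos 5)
Your proposal is correct and follows essentially the same route as the paper: multiply $M_S$ by $W_{p,n}\otimes I_{q^n}$, identify the resulting rows as $\Ind_{\overline{H}_{b_p}}\otimes\Ind_{L_q(b_p,b_q)}$, lower-bound the span via Lemma~\ref{lem:tensorProductSpaceBound} using Theorem~\ref{thm:pRankSubs} on the $p$-factor and the rank--size relation on the $q$-factor, then amplify with Lemma~\ref{lem:productkakeya}. The only (harmless) deviation is that you bound $|T_i|$ by $\binom{q+n-2}{n-1}$ via Theorem~\ref{thm:oldnewproof} where the paper invokes Theorem~\ref{thm:TightFqbound} to get $q^{n-1}2^{-n}$ for $B_c$, so your base bound is $p^{n-1}q^{n-2}$ rather than $C_nN^{n-1}$; as you note, the tensor-power amplification absorbs this extra factor of $q$ into the $\eps$.
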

\begin{proof}
Let $R=\Z/pq\Z$. All our  vectors and matrices will be over $\F_p$. Given a Kakeya set $S\subseteq R^n$ consider the line matrix $M_S$ associated with $S$ over the field $\F_p$. Our goal is to lower bound 
$\rank_{\F_p}(M_S)$.   For a direction $$b=(b_p,b_q)\in \P\F_{p}^{n-1}\times \P\F_q^{n-1},$$ the row in $M_S$ corresponding to $b$ will be the indicator vector $\Ind_{L(b)}\in \F_p^{|R^n|}$ of a line in direction $b$ contained in $S$ denoted  $$L(b)=L(b_p,b_q) = L_p(b_p,b_q) \times L_q(b_p,b_q)$$ which is itself Cartesian product of lines $L_p(b_p,b_q) \subseteq \F_{p}^n$ in the direction $b_p$ and $L_q(b_p,b_q)\subseteq \F_{q}^n$ in the direction $b_q$. Note, $L_p(b_p,b_q) $ includes $b_q$ because the lines $L_p(b_p,c_1)$ and $L_p(b_p,c_2)$ can be potentially different when $c_1\ne c_2$. Finally, notice that the indicator vector $\Ind_{L(b)}\in \F_p^{|R^n|}$ equals the tensor product $$\Ind_{L(b)} = \Ind_{L_p(b_p,b_q)}\otimes \Ind_{L_q(b_p,b_q)}.$$

Let $W_{p,n}$ be the point-hyperplane  incidence matrix defined in the previous section. Let $I_{q^n}$ be the identity matrix of size $q^n\times q^n$. The rows and columns in $I_{q^n}$ are thought to be indexed by points in $\F_{q}^n$. Consider the Kronecker product $W_{p,n}\otimes I_{q^n}$. We will examine the product $$M_S\cdot (W_{p,n}\otimes I_{q^n}).$$ If we look at the row in $M_S$ indexed by a direction $b=(b_p,b_q)\in \P \F_{p}^{n-1}\times \P\F_q^{n-1}$, the corresponding row in $M_S\cdot (W_{p,n}\otimes I_{q^n})$ is the product,
\begin{align*}
\Ind_{L(b)}\cdot (W_{p,n}\otimes I_{q^n})&=(\Ind_{L_p(b_p,b_q)}\otimes \Ind_{L_q(b_p,b_q)})\cdot (W_{p,n}\otimes I_{q^n})\\
&=(\Ind_{L_p(b_p,b_q)}\cdot W_{p,n})\otimes \Ind_{L_q(b_p,b_q)} && \text{(Using Fact } \ref{fact:multOfKronecker}\text{)}  \\
&=\Ind_{\overline{H}_{b_p}}\otimes \Ind_{L_q(b_p,b_q)}, \numberthis \label{eq:tensorSimpleInd}    
\end{align*}
where we recall $\overline{H}_{c}$ is the set $\{x|\langle x,c\rangle \ne 0\}\subseteq \F_p^n$ where $c\in \P \F_p^{n-1}$ and the last step in \eqref{eq:tensorSimpleInd} uses  Lemma \ref{lem:centeredHypAct}.

Denote the set of  vectors $$V=\{\Ind_{\overline{H}_c}\,|\, c\in \P\F_p^{n-1}\} \subset  \F_p^{p^n}$$ and, for each $c \in \P \F_p^{n-1}$, the set of vectors 
$$B_c=\{\Ind_{L_q(c,b_q)}\,|\,b_q\in \P\F_q^{n-1}\} \subset \F_p^{q^n}.$$ 

From Theorem~\ref{thm:pRankSubs} (following the argument in Theorem~\ref{thm:oldnewproof}) we know that the dimension the space spanned by $V$ is at least $\binom{p+n-2}{n-1}$. Next, fix some $c \in \P\F_p^{n-1}$. The vectors forming $B_c$ are the indicators of a set of lines in every direction in $\F_q^n$. Lemma \ref{lem:rankSizeLine}, combined with the bound on Kakeya sets in $\F_q^n$ (Theorem~\ref{thm:TightFqbound}) imply that $B_c$ will have rank at least $q^{n-1}2^{-n}$ over any field and in particular over $\F_p$. Hence, by Lemma~\ref{lem:tensorProductSpaceBound} we have that the set of vectors $$\left\{ \Ind_{\overline{H}_c}\otimes u\,\,|\,\,c \in \P\F_p^{n-1}, u\in B_c\right\}$$ has rank at least  $$\binom{p+n-2}{n-1} \cdot q^{n-1}2^{-n}.$$ Since these are the rows of $M_S$ after multiplying by a matrix $W_{p,n} \otimes I_{q^n}$ we get that this is also a lower bound on the rank of $M_S$. By Lemma~\ref{lem:rankSizeLine}, we have that $$ |S| \geq \binom{p+n-2}{n-1} \cdot q^{n-1}2^{-n} \geq C_n \cdot N^{n-1}$$ for some constant $C_n$ depending only on $n$. The bound in the theorem now follows from applying the weaker  bound on the $t$-fold Cartesian product $S^t = S \times \ldots \times S \subset R^{nt}$ (which is also a Kakeya set by  Lemma~\ref{lem:productkakeya}) and then use the fact that $|S^t| = |S|^t$.
\end{proof}

\section{The general square-free \(N\) case}\label{sec:sqfreegen}

To get the bound stated in Theorem~\ref{thm:SquarefreeNtheorem} we will generalize the proof structure of Section~\ref{sec:finitefield} to use ideas from the extended polynomial method using high order multiplicities as in \cite{dvir2013extensions}. 

We begin with some definitions and basic results concerning polynomials over finite fields.  We let $\F_p[x_1,\hdots,x_n]_{=d}$ denote the vector space of homogeneous $n$-variate degree $d$ polynomials over $\F_p$ and $\F_p[x_1,\hdots,x_n]_{\le d}$ denote  the space of polynomials of degree at most $d$. We let $$\delta_{n,d}=\binom{n+d-1}{n-1}$$ denote the dimension of the space $ \F_p[x_1,\hdots,x_n]_{=d}$ and $$\Delta_{n,d}=\binom{n+d}{n}$$  the dimension of  $\F_p[x_1,\hdots,x_n]_{\le d}$.  For a tuple $\mathbf{i}\in \Z_{\ge 0}^n$ we define the weight of $\mathbf{i}$ as $$\text{wt}(\mathbf{i})=\sum_{j=1}^n i_j.$$ 

\begin{define}[Hasse Derivatives]
Given a polynomial $f\in \F[x_1,\hdots,x_n]$ for any field $\F$ and an $\mathbf{i}\in \Z_{\ge 0}^n$ the $\mathbf{i}$th {\em Hasse derivative} of $f$ is the polynomial $f^{(\mathbf{i})}$ in the expansion $f(x+z)=\sum_{\mathbf{j}\in \Z_{\ge 0}^n} f^{(\mathbf{j})}(x)z^{\mathbf{j}}$ where $x=(x_1,...,x_n)$, $z=(z_1,...,z_n)$ and $z^{\mathbf{j}}=\prod_{k=1}^n z_k^{j_k}$.  
\end{define}

\begin{define}[Multiplicity]
For a polynomial $f\in \F[x_1,\hdots,x_n]$ and a point $a\in \F^n$ we say $f$ vanishes on $a$ with {\em multiplicity} $m\in \Z$, if $m$ is the largest integer such that all Hasse derivatives of $f$ of weight strictly less than $m$ vanish on $a$. We use $\text{mult}(f,a)$ to refer to the multiplicity of $f$ at $a$.
\end{define}

Notice, $\text{mult}(f,a)=1$ just means $f(a)=0$. Also the number of Hasse derivatives over $\F[x_1,\hdots,x_n]$ with weight strictly less than $m$ is $\Delta_{n,m-1}$. One can also check that for a univariate polynomial $f(x)$ to vanish at $a$ with multiplicity $m$, $f$ must be divisible by $(x-a)^m$. 

We will need an extended Schwartz-Zippel bound~\cite{schwartz1979probabilistic,ZippelPaper} which leverages multiplicities. The proof can be found in \cite{dvir2013extensions}.

\begin{lem}[Schwartz-Zippel with multiplicities]\label{multSchwartz}
Let $f\in \F[x_1,..,x_n]_{\le d}$, with $\F$ an arbitrary field and $d\in \Z$. Then for any finite subset $U\subseteq \F$ ,
$$\sum\limits_{a\in U^n} \text{mult}(f,a) \le d|U|^{n-1}.$$
\end{lem}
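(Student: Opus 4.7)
The plan is to prove the lemma by induction on the number of variables $n$. For the \emph{base case} $n=1$: if $f \in \F[x]$ has degree at most $d$ and $\mathrm{mult}(f, a) = m_a$ for each $a \in U$, then the univariate characterization of multiplicity gives that $(x-a)^{m_a}$ divides $f$. Since the factors $(x - a)$ for distinct $a \in U$ are pairwise coprime, the product $\prod_{a \in U}(x-a)^{m_a}$ divides $f$, so $\sum_{a \in U} m_a \le \deg f \le d = d \cdot |U|^{0}$.

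For the \emph{inductive step}, I will assume the claim for $n-1$ and write $f = \sum_{i=0}^{D} f_i(x_1,\ldots,x_{n-1})\, x_n^i$ with $D \le d$ the degree of $f$ in $x_n$ and $\deg f_i \le d-i$. For each $a' \in U^{n-1}$, let $\phi_{a'}(x_n) = f(a', x_n)$. The starting point is the inequality $\mathrm{mult}(f, (a', a_n)) \le \mathrm{mult}(\phi_{a'}, a_n)$ whenever $\phi_{a'} \not\equiv 0$, which follows from the identity $f^{(0,\ldots,0,j)}(a', a_n) = \phi_{a'}^{(j)}(a_n)$ between $x_n$-only Hasse derivatives. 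For the ``generic'' values of $a'$ (those with $f_D(a') \ne 0$), $\phi_{a'}$ has degree exactly $D$, so applying the base case to $\phi_{a'}$ gives $\sum_{a_n \in U} \mathrm{mult}(f, (a', a_n)) \le D$. For the ``degenerate'' $a'$ with $f_D(a') = 0$, the fiber sum may be larger, but the degeneracy is itself constrained: $f_D$ is a polynomial of total degree at most $d - D$ in $n-1$ variables, and the inductive hypothesis applied to $f_D$ gives $\sum_{a' \in U^{n-1}} \mathrm{mult}(f_D, a') \le (d-D)|U|^{n-2}$.

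The final step will be to show that the excess multiplicity contributed by a degenerate fiber over $a'$ is bounded by $|U|$ times $\mathrm{mult}(f_D, a')$. Combined with the generic bound, this will yield a total of $D |U|^{n-1} + |U|\cdot (d-D)|U|^{n-2} = d |U|^{n-1}$, as desired. The main obstacle I expect is verifying this excess-multiplicity claim precisely: one must iteratively apply $x_n$-direction Hasse derivatives to peel off the degeneracy at $a'$, track how multiplicity transfers between the $x'$- and $x_n$-directions, and then invoke the base case on the resulting univariate restriction whose degree has been reduced in a controlled way. This is where the use of Hasse derivatives (rather than ordinary partial derivatives, which fail to track multiplicities correctly in positive characteristic) is essential, following the machinery developed in \cite{dvir2013extensions}.
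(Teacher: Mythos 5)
First, a point of reference: the paper does not prove this lemma itself --- it quotes it from \cite{dvir2013extensions} --- so the comparison is with the proof there. Your plan follows exactly that proof's skeleton (induction on $n$, decomposing $f=\sum_i f_i x_n^i$, charging the degenerate fibers to $\text{mult}(f_D,a')$ via the inductive hypothesis), and the final accounting $D|U|^{n-1}+|U|\cdot(d-D)|U|^{n-2}=d|U|^{n-1}$ is correct.

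The gap is that the entire content of the inductive step is the per-fiber claim $\sum_{a_n\in U}\text{mult}(f,(a',a_n))\le D+|U|\cdot\text{mult}(f_D,a')$, and you only announce it; moreover the mechanism you sketch for it --- ``iteratively apply $x_n$-direction Hasse derivatives to peel off the degeneracy at $a'$'' --- points in the wrong direction. Differentiating in $x_n$ cannot remove the degeneracy $f_D(a')=0$, since $f_D$ does not involve $x_n$; and on a fully degenerate fiber (all $f_i(a')=0$, so $\phi_{a'}\equiv 0$) your starting inequality $\text{mult}(f,(a',a_n))\le\text{mult}(\phi_{a'},a_n)$ gives nothing. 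The correct move is to differentiate in the first $n-1$ variables: set $m_{a'}=\text{mult}(f_D,a')$, choose $\mathbf{i}\in\Z_{\ge 0}^{n-1}$ with $\text{wt}(\mathbf{i})=m_{a'}$ and $f_D^{(\mathbf{i})}(a')\ne 0$, and let $g=f^{(\mathbf{i},0)}$. Then $g(a',x_n)=\sum_j f_j^{(\mathbf{i})}(a')x_n^j$ is a nonzero univariate polynomial of degree exactly $D$, and two properties of Hasse derivatives that your write-up never states --- (a) $\text{mult}(f^{(\mathbf{j})},a)\ge\text{mult}(f,a)-\text{wt}(\mathbf{j})$, which follows from the composition rule $(f^{(\mathbf{j})})^{(\mathbf{k})}=\binom{\mathbf{j}+\mathbf{k}}{\mathbf{j}}f^{(\mathbf{j}+\mathbf{k})}$, and (b) multiplicity does not decrease under restriction, so $\text{mult}(g(a',\cdot),a_n)\ge\text{mult}(g,(a',a_n))$ --- combine with your base case applied to $g(a',\cdot)$ to give $\sum_{a_n}\text{mult}(f,(a',a_n))\le |U|m_{a'}+D$ uniformly in $a'$, with no case split needed. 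Until those two facts are proved and the derivative is taken in the right variables, what you have is a plan rather than a proof.
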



We now define a family of linear maps sending a polynomial to a list of its evaluations (with derivatives) over some set. These maps (or more precisely, the matrices representing them) will replace the matrices $W_{p,n}$ used in Section~\ref{sec:finitefield}.

\begin{define}[Evaluation maps]
For a prime $p$, natural numbers $n$ and $m$, given a set $A$ in $\F_p^n$, we let $$\text{EVAL}_A^m:\F_p[x_1,\hdots,x_n]\rightarrow \F^{|A|\Delta_{n,m-1}}$$ refer to the linear map from $\F_p[x_1,\hdots,x_n]$ to the evaluation of all Hasse derivatives of weight strictly less than $m$ over the set $A$.  We treat the points in the co-domain $\F_p^{|A|\Delta_{n,m-1}}$ as column vectors of length $|A|\Delta_{n,m-1}$ indexed by tuples $(x,\mathbf{j})\in A\times \Z_{\ge 0}^n$ with $\text{wt}(\mathbf{j})<m$. The $(x,\mathbf{j})$th entry of $\text{EVAL}_A^m(f)$ for a polynomial $f\in \F_p[x_1,\hdots,x_n]$ is $f^{(\mathbf{j})}(x)$. For singleton sets $\{x\}$ we omit the curly braces and write $\text{EVAL}_x^m$. 
\end{define}

We will now construct matrices $C^k_L$ which will replace the indicators $\Ind_{L(b)}$ used as the rows of $M_S$ in the proofs of the previous sections. Intuitively, the matrix $C^k_L$ for some line $L$ in direction $b$ corresponds to the linear map which takes as input the evaluations (up to some order $m$ depending on $k$) of a polynomial $f$ on the line $L$ and output the evaluation of $f$ (up to order $k$) at the point $b$. This is possible as long as the degree of $f$ is not too big as a consequence of Lemma~\ref{multSchwartz}. 

\begin{lem}[The decoding matrix $C^k_L$]\label{lem:infitDerLine}
Given a prime $p$, numbers $k,n,m\in \Z_{\ge 0}$ where $p|k$ and $m=2k-k/p$, and a line $L\subseteq \F_p^n$ in the direction $b\in \P\F_p^{n-1}$, we can construct a $\Delta_{n,k-1}\times p^n\Delta_{n,m-1}$ matrix $C_L^k$ such that,
\begin{enumerate}
    \item The rows in the matrix $C_L^k$ are indexed by points $\mathbf{i}\in \Z_{\ge 0}^n, \text{wt}(\mathbf{i})<k$ and the columns are indexed by tuples $(x,\mathbf{j})\in \F_p^n\times \Z_{\ge 0}^n$ with $\text{wt}(\mathbf{j})<m$.
    \item The only non-zero columns are the ones corresponding to tuples $(x,\mathbf{j})$ with $x\in L$.
    \item For a polynomial $f\in \F_p[x_1,\hdots,x_n]_{=kp-1}$ we have,
    $$C_L^k\cdot \text{EVAL}_{\F^n_p}^m(f)=\text{EVAL}_{b}^k(f).$$
\end{enumerate}
\end{lem}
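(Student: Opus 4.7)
The plan is to construct $C_L^k$ implicitly via linear algebra. Property (3) asserts that every coordinate $f^{(\mathbf{i})}(b)$ (with $\text{wt}(\mathbf{i})<k$) is a specific linear functional of the restricted vector $\{f^{(\mathbf{j})}(x):x\in L,\text{wt}(\mathbf{j})<m\}$. Such a functional exists, by standard linear algebra, if and only if $f^{(\mathbf{i})}(b)=0$ for every $f\in\F_p[x_1,\ldots,x_n]_{=kp-1}$ whose Hasse derivatives of weight $<m$ vanish on all of $L$. Given such a functional, we obtain the corresponding row of $C_L^k$ by padding with zeros on all coordinates $(x,\mathbf{j})$ with $x\notin L$; properties (1) and (2) are then automatic, and the dimensions match because there are $\Delta_{n,k-1}$ choices of $\mathbf{i}$ and $p^n\Delta_{n,m-1}$ choices of $(x,\mathbf{j})$.

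The lemma therefore reduces to the following key vanishing claim: if $f\in\F_p[x_1,\ldots,x_n]_{=kp-1}$ satisfies $f^{(\mathbf{j})}(c)=0$ for every $c\in L$ and every $\mathbf{j}$ with $\text{wt}(\mathbf{j})<m$, then $f^{(\mathbf{i})}(b)=0$ for every $\mathbf{i}$ with $\text{wt}(\mathbf{i})<k$. Writing $L=\{a+tb:t\in\F_p\}$ and fixing such an $\mathbf{i}$, introduce the univariate polynomial $\tilde g_{\mathbf{i}}(t):=f^{(\mathbf{i})}(a+tb)\in\F_p[t]$, which has degree at most $kp-1-\text{wt}(\mathbf{i})$ because $f^{(\mathbf{i})}$ is homogeneous of that degree. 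Expanding $f^{(\mathbf{i})}((a+t_0b)+sb)$ in $s$ and using the standard identity $(f^{(\mathbf{i})})^{(\mathbf{l})}=\binom{\mathbf{i}+\mathbf{l}}{\mathbf{i}}f^{(\mathbf{i}+\mathbf{l})}$ yields $\tilde g_{\mathbf{i}}^{(r)}(t_0)=\sum_{\text{wt}(\mathbf{l})=r}\binom{\mathbf{i}+\mathbf{l}}{\mathbf{i}}f^{(\mathbf{i}+\mathbf{l})}(a+t_0b)\,b^{\mathbf{l}}$, which vanishes whenever $r<m-\text{wt}(\mathbf{i})$ by the hypothesis applied at the point $a+t_0b\in L$ to the derivative index $\mathbf{i}+\mathbf{l}$ of weight $<m$. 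Hence $\tilde g_{\mathbf{i}}$ has multiplicity at least $m-\text{wt}(\mathbf{i})$ at each of the $p$ points of $\F_p$, for a total multiplicity of at least $p(m-\text{wt}(\mathbf{i}))$.

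The univariate case of Lemma \ref{multSchwartz} then forces $\tilde g_{\mathbf{i}}\equiv 0$ whenever $p(m-\text{wt}(\mathbf{i}))>\deg(\tilde g_{\mathbf{i}})$, and it suffices to check this against the upper bound $kp-1-\text{wt}(\mathbf{i})$. Substituting $m=2k-k/p$ the inequality becomes $\text{wt}(\mathbf{i})<k+1/(p-1)$, which, since $\text{wt}(\mathbf{i})$ is an integer, holds for every $\text{wt}(\mathbf{i})\le k$ and in particular for our range $\text{wt}(\mathbf{i})<k$. So $\tilde g_{\mathbf{i}}\equiv 0$. Finally, because $f^{(\mathbf{i})}$ is homogeneous of degree $e:=kp-1-\text{wt}(\mathbf{i})$, the identity $f^{(\mathbf{i})}(a+tb)=f^{(\mathbf{i})}(b)\,t^{e}+(\text{lower-order in }t)$ (evident from $f^{(\mathbf{i})}(tb)=t^{e}f^{(\mathbf{i})}(b)$) shows that the leading coefficient of $\tilde g_{\mathbf{i}}$ is precisely $f^{(\mathbf{i})}(b)$. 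This coefficient must therefore vanish, yielding $f^{(\mathbf{i})}(b)=0$.

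The main obstacle is calibrating the multiplicity parameter: the choice $m=2k-k/p$ is precisely what makes $p(m-\text{wt}(\mathbf{i}))$ strictly exceed the degree bound $kp-1-\text{wt}(\mathbf{i})$ for every $\text{wt}(\mathbf{i})<k$, so that Schwartz--Zippel kicks in uniformly across the relevant range of $\mathbf{i}$; the hypothesis $p\mid k$ is needed only to make $m$ an integer. Everything else is routine linear algebra combined with the multiplicity-enhanced Schwartz--Zippel bound and the homogeneity of $f$.
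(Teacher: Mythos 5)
Your proposal is correct and follows the same overall reduction as the paper: both arguments boil the lemma down to the kernel-containment statement ``if all Hasse derivatives of $f$ of weight $<m$ vanish on $L$ then all Hasse derivatives of weight $<k$ vanish at $b$,'' invoke the standard fact that $\ker A\subseteq\ker B$ yields a matrix $C$ with $CA=B$, and pad with zero columns to get properties (1) and (2). The one difference is that the paper simply cites this vanishing implication to Theorem 3.2 of \cite{dvir2013extensions}, whereas you prove it in full; your proof is the right one and checks out --- the identity $(f^{(\mathbf{i})})^{(\mathbf{l})}=\binom{\mathbf{i}+\mathbf{l}}{\mathbf{i}}f^{(\mathbf{i}+\mathbf{l})}$ gives multiplicity $\ge m-\mathrm{wt}(\mathbf{i})$ for $\tilde g_{\mathbf{i}}$ at each of the $p$ points, the inequality $p\bigl(m-\mathrm{wt}(\mathbf{i})\bigr)>kp-1-\mathrm{wt}(\mathbf{i})$ does reduce to $\mathrm{wt}(\mathbf{i})<k+1/(p-1)$, and homogeneity correctly identifies $f^{(\mathbf{i})}(b)$ as the top coefficient of $\tilde g_{\mathbf{i}}$. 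So your write-up is, if anything, more self-contained than the paper's.
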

\begin{proof}
We will need the following two claims.
\begin{claim}
For homogenous polynomials $f$ of degree $kp-1$ we have 
\begin{align}
    \text{EVAL}^m_L(f)=0\implies \text{EVAL}^k_b(f)=0,\label{eq:LineToDirection}
\end{align}
where $m=2k-k/p$.
\end{claim}
The proof can be found in Theorem 11 (in the arxiv version) and Theorem 3.2 (in the SIAM version) of \cite{dvir2013extensions} and is a consequence of Lemma \ref{multSchwartz}. 
\begin{claim}
Let $A$ be an $n_1 \times w$ matrix and $B$ be an  $n_2 \times w$ matrix both over a field $\F$ and suppose that, for all $x\in \F^w$ we have,
$$Ax=0\implies Bx=0.$$
Then there exists a matrix $C$ of size $n_2\times n_1$ such that $C\cdot A=B$.
\end{claim}
\begin{proof}
For all $x\in \F^w, Ax=0\implies Bx=0$ means the kernel of $A$ is a subset of the kernel of $B$. This means every row of $B$ is spanned by the row space of $A$. This immediately implies that we can construct $C$ such that $CA=B$.
\end{proof}

Combining the two claims lets us construct a matrix $C'$ such that 
\begin{align}
    C'\cdot \text{EVAL}_{L}^m(f)=\text{EVAL}_b^k(f).\label{eq:Cpartialprop}
\end{align} 
The columns in $C'$ are indexed by tuples $(x,\mathbf{j})\in L\times \Z_{\ge 0}$ such that $\text{wt}(\mathbf{j})<m$. We add zero columns to $C'$ corresponding to tuples $(x,\mathbf{j})\in (\F_p^n\setminus L)\times \Z_{\ge 0}^n$ with $\text{wt}(\mathbf{j})<m$. This gives us $C^k_L$. By construction it satisfies the first two properties. The third property follows from \eqref{eq:Cpartialprop}.
\end{proof}

In our proof, it will be convenient to work with the following extension of rank for sets of matrices.

\begin{define}[crank of a set of matrices]
Given a finite set $T=\{A_1,\hdots,A_n\}$ of matrices over a field $\F$ having the same number of columns we let $\crank(T)$ be the rank of the matrix obtained by concatenating all the elements $A_i$ in $T$ along their columns. In other words it is the dimension of the subspace spanned by the vectors in the set $\bigcup_{i=1}^n\{r|r\text{ is a row in }A_i\}$.
\end{define}

We will use a simple lemma which follows from the definition.

\begin{lem}[crank bound for multiplying matrices]\label{lem:crankMatMult}
Given matrices $A_1,\hdots,A_n$ of size $a\times b$ and a matrix $H$ of size $b\times c$ we have
\em{$$\crank\{A_i\}_{i=1}^n\ge\crank\{A_i\cdot H\}_{i=1}^n.$$}
\end{lem}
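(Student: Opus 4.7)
The plan is to reduce the claim to the standard fact that right-multiplication by a fixed matrix cannot increase rank. Let me spell out the reduction.

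First I would unpack the definition of $\crank$ in terms of a single concatenated matrix. Let $M$ denote the $na \times b$ matrix formed by stacking $A_1, A_2, \ldots, A_n$ vertically. By the definition, $\crank\{A_i\}_{i=1}^n = \rank(M)$, since the row space of $M$ is exactly the span of the union of the rows of the individual $A_i$. Similarly, let $M'$ denote the $na \times c$ matrix formed by stacking $A_1 H, A_2 H, \ldots, A_n H$ vertically; then $\crank\{A_i H\}_{i=1}^n = \rank(M')$. The key observation is that because stacking commutes with right-multiplication by $H$, we have the matrix identity $M' = M \cdot H$.

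Next I would invoke the standard inequality $\rank(MH) \le \rank(M)$. One clean way to justify this (without appealing to an external black box) is to note that the row space of $MH$ is the image of the row space of $M$ under the linear map $\F^b \to \F^c$ sending $v \mapsto vH$. Since linear maps cannot increase the dimension of a subspace, $\dim(\text{rowspan}(MH)) \le \dim(\text{rowspan}(M))$, i.e., $\rank(MH) \le \rank(M)$. Substituting in the identifications from the previous paragraph gives $\crank\{A_i H\}_{i=1}^n = \rank(MH) \le \rank(M) = \crank\{A_i\}_{i=1}^n$, which is exactly the desired inequality.

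There is essentially no obstacle here; the lemma is a packaging statement, repackaging the familiar fact $\rank(MH) \le \rank(M)$ in the language of $\crank$. The only thing to be careful about is the bookkeeping: checking that stacking the $A_i$ and then multiplying by $H$ on the right yields the same matrix as multiplying each $A_i$ by $H$ individually and then stacking, which is immediate from the block-matrix identity
\[
\begin{pmatrix} A_1 \\ \vdots \\ A_n \end{pmatrix} H \;=\; \begin{pmatrix} A_1 H \\ \vdots \\ A_n H \end{pmatrix}.
\]
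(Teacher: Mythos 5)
Your proof is correct, and it matches the paper's intent: the paper states this lemma without proof, remarking only that it ``follows from the definition,'' and your argument---stacking the $A_i$ into a single matrix $M$, observing $\crank\{A_i\}_{i=1}^n=\rank(M)$ and $\crank\{A_iH\}_{i=1}^n=\rank(MH)$, and invoking $\rank(MH)\le\rank(M)$---is exactly the standard unpacking the authors had in mind. Nothing is missing.
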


We now need an extension of Lemma \ref{lem:tensorProductSpaceBound} for this definition.

\begin{lem}[crank bound for tensor products]\label{lem:crankTensorProduct}
Given matrices $A_1,\hdots,A_n$ of size $a_1\times a_2$ over a field $\F$ such that $\crank\{A_i\}_{i=1}^n\ge r_1$ and matrices $B_{i,j}$ over the field $\F$ for $i\in [n]$ and $j\in [m]$ of size $b_1\times b_2$ such that $\crank\{B_{i,j}\}_{j=1}^m\ge r_2$ for all $i\in [n]$ we have,
\em{$$\crank\{A_i\otimes B_{i,j}|i\in [n],j\in[m]\}\ge r_1r_2.$$}
\end{lem}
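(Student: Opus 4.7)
The plan is to reduce Lemma \ref{lem:crankTensorProduct} to the vector version Lemma \ref{lem:tensorProductSpaceBound} by carefully choosing linearly independent rows from each side of the tensor and verifying that the resulting tensor products are genuine rows of matrices in our family $\{A_i \otimes B_{i,j}\}$.

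First, I would unpack the crank hypothesis on the $A_i$'s: since the union of the rows of $A_1,\ldots,A_n$ spans a subspace of dimension at least $r_1$, I can choose $r_1$ linearly independent vectors $\rho_1,\ldots,\rho_{r_1}$ where each $\rho_k$ is a specific row of some matrix $A_{i_k}$ (the indices $i_k$ need not be distinct, which is fine). Next, for each $k \in [r_1]$, the crank hypothesis applied to the family $\{B_{i_k,j}\}_{j=1}^m$ yields $r_2$ linearly independent vectors $w_{k,1},\ldots,w_{k,r_2}$, where each $w_{k,\ell}$ is a row of some $B_{i_k,j_{k,\ell}}$.

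The key observation is that because both factors share the same first index $i_k$, the tensor $\rho_k \otimes w_{k,\ell}$ is literally a row of the matrix $A_{i_k} \otimes B_{i_k,j_{k,\ell}}$, which belongs to the family in the statement. Hence it suffices to prove that the collection $\{\rho_k \otimes w_{k,\ell} : k \in [r_1],\, \ell \in [r_2]\}$ is linearly independent. But this follows directly from Lemma \ref{lem:tensorProductSpaceBound} applied with $V$ and $U$ being the ambient row spaces, the independent vectors $v_k := \rho_k$, and the subsets $B_k := \{w_{k,1},\ldots,w_{k,r_2}\}$, each of which spans a subspace of dimension exactly $r_2$ by construction.

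The only subtle point I would want to double-check is the possibility that the same index $i$ arises for several different $\rho_k$'s; in that case the sets of $B_{i,j}$'s from which we draw the $w_{k,\ell}$ overlap. This causes no issue, however, because Lemma \ref{lem:tensorProductSpaceBound} imposes no disjointness or coherence condition across different $k$'s on the sets $B_k \subseteq U$ — it only needs each $B_k$ individually to span an $r_2$-dimensional subspace and the $v_k$'s to be jointly linearly independent, both of which we have already arranged. So there is no real obstacle; the proof is a clean two-step extraction followed by an invocation of Lemma \ref{lem:tensorProductSpaceBound}.
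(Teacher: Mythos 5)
Your proposal is correct and takes essentially the same approach as the paper: both arguments reduce the claim to Lemma~\ref{lem:tensorProductSpaceBound} by selecting $r_1$ linearly independent rows from among the $A_i$'s and pairing each chosen row of $A_{i_k}$ with the set of rows of the matrices $B_{i_k,j}$. Your write-up is, if anything, a bit more careful than the paper's in spelling out why each tensor $\rho_k\otimes w_{k,\ell}$ is a genuine row of a matrix in the family and why repeated indices $i_k$ cause no trouble.
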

\begin{proof}
Let $V=\bigcup_{i=1}^n\{w|w\text{ is a row in }A_i\}$ and $U_i=\bigcup_{j=1}^m\{w|w\text{ is a row in }B_{i,j}\}$ for $i\in [n]$. $V$ has rank at least $r_1$ and each of the $U_i$ will have rank at least $r_2$. Using Lemma \ref{lem:tensorProductSpaceBound} we see that the set of vectors $\bigcup_{i=1}^n\{w_1\otimes w_2|w_1\text{ is a row in }A_i,w_2\in U_i\}$ will have rank at least $r_1r_2$. This gives us the desired $\crank$ bound too.
\end{proof}

\subsection{Proof of Theorem~\ref{thm:SquarefreeNtheorem}}
We are now ready to prove our main result restated here for convenience.

\begin{repthm}{thm:SquarefreeNtheorem}
Let $N=p_1\hdots p_r$ for distinct primes $p_1,\hdots,p_r$.  Any Kakeya Set $S\subseteq (\Z/N\Z)^n$ satisfies
$$|S| \geq \frac{N^n}{\prod\limits_{i=1}^r (2-1/p_i)^{n}}.$$
\end{repthm}
\begin{proof}
We will prove this using induction over $r$. When $r=1$ the result is known via Theorem \ref{thm:TightFqbound}.

Let us assume the bound holds for a product of $r$ primes. Let $N=p_1\hdots p_{r+1}$ for $r+1$ distinct primes and $R=\Z/N\Z$. To prove a lower bound let us take a Kakeya set $S$ in $R^n$. For convenience we let $N_0=p_2p_3\hdots p_{r+1}$ and $R_0=\Z/N_0\Z$.

All our matrices and indicator vectors will be over $\F_{p_1}$.
By the Chinese remainder theorem $R^n$ is isomorphic to $\F^n_{p_1}\times R_0^n$. Every direction $b\in \P R^{n-1}$ is represented by a tuple $(b_1,b_0)\in \P\F_{p_1}^{n-1}\times \P R_0^{n-1}$. Any line $L\subseteq R^n$ in direction $b=(b_1,b_0)\in \P\F_{p_1}^{n-1}\times \P R_0^{n-1}$ is a product of lines $L_1\subseteq \F_{p_1}^n$ in direction $b_1$ and $L_0\subseteq R_0^n$ in direction $b_0$. The indicator vector  $\Ind_{L}\in \F_{p_1}^{|R^n|}$ will equal $\Ind_{L_1}\otimes \Ind_{L_0}\in \F_{p_1}^{|\F_{p_1}^n\times R_0^n|}$.



For each direction $b\in \P R^{n-1}$ we must have a line $L(b)$ contained in $S$. If there are many such lines we pick one arbitrarily. The line $L(b)$ will be the product of lines $L_1(b)$ and $L_0(b)$ in $\F_{p_1}^n$ and $R_0^n$ respectively. 

Let us fix a natural number $k$ divisible by $p_1$. For a direction $b$ consider the matrix $C^k_{L_1(b)}$ (given by Lemma \ref{lem:infitDerLine}) over the field $\F_{p_1}$ which will be of size $\Delta_{n,k-1}\times p_1^n\Delta_{n,m-1}$ with $$m=2k-k/p_1.$$ The following claim generalizes Lemma~\ref{lem:rankSizeLine} and allows us to lower bound $|S|$ using a rank bound (in this case the rank of the matrix containing all rows in all $C_{L(b)}^k$).



\begin{claim}[$\crank$-Size relation]\label{lem:rankSizeProject}
\em{$$|S|\binom{m+n-1}{n}\ge \crank\{C^k_{L_1(b)}\otimes \Ind_{L_0(b)}\}_{b\in \P R^{n-1}}.$$}
\end{claim}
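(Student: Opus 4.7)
The plan is to prove this by a straightforward support-counting argument: the $\crank$ of any collection of matrices is trivially bounded by the number of coordinate positions where some row of some matrix in the collection is nonzero, so it suffices to identify the common support.

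First I would unpack the structure of a single block $C^k_{L_1(b)}\otimes \Ind_{L_0(b)}$. By Lemma~\ref{lem:infitDerLine}, the columns of $C^k_{L_1(b)}$ are indexed by pairs $(x,\mathbf{j})\in \F_{p_1}^n\times \Z_{\ge 0}^n$ with $\text{wt}(\mathbf{j})<m$, and the only possibly nonzero columns are those with $x\in L_1(b)$. Tensoring with $\Ind_{L_0(b)}\in \F_{p_1}^{|R_0^n|}$ gives a matrix whose columns are indexed by triples $(x,\mathbf{j},y)$ with $x\in \F_{p_1}^n$, $\text{wt}(\mathbf{j})<m$, $y\in R_0^n$, and whose only possibly nonzero columns are those with $x\in L_1(b)$ and $y\in L_0(b)$, i.e.\ those with $(x,y)\in L_1(b)\times L_0(b)=L(b)$.

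Next I would take the union over all directions $b\in \P R^{n-1}$. Since $S$ is a Kakeya set and each $L(b)\subseteq S$ by construction, the set of column indices that can be nonzero across the whole family $\{C^k_{L_1(b)}\otimes \Ind_{L_0(b)}\}_b$ is contained in
\[
\bigl\{(x,\mathbf{j},y)\,:\,(x,y)\in S,\ \text{wt}(\mathbf{j})<m\bigr\}.
\]
The cardinality of this set is exactly $|S|\cdot \Delta_{n,m-1}=|S|\binom{m+n-1}{n}$.

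Finally, the $\crank$ of the family is the rank of the matrix obtained by vertically stacking all the matrices $C^k_{L_1(b)}\otimes \Ind_{L_0(b)}$. This stacked matrix has all of its nonzero entries confined to the $|S|\binom{m+n-1}{n}$ columns identified above, so its rank, and hence the $\crank$, is at most $|S|\binom{m+n-1}{n}$. This is precisely the desired inequality, and I do not foresee any real obstacle: the content is purely combinatorial bookkeeping, with the (slightly subtle) polynomial content of Lemma~\ref{lem:infitDerLine} used only to pin down which columns of $C^k_{L_1(b)}$ can be nonzero.
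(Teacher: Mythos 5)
Your proof is correct and follows essentially the same route as the paper: both arguments bound the $\crank$ by counting the columns that can be nonzero, using Lemma~\ref{lem:infitDerLine} to confine the support of each block to $L_1(b)\times L_0(b)=L(b)\subseteq S$, giving at most $|S|\cdot\Delta_{n,m-1}=|S|\binom{m+n-1}{n}$ nonzero columns in the stacked matrix. Your write-up is in fact slightly more explicit than the paper's about the triple indexing $(x,\mathbf{j},y)$ versus the paper's identification of $(x,y)$ with a single point of $R^n$.
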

\begin{proof}
The columns in $C^k_{L_1(b)}\otimes \Ind_{L_0(b)}$ for all $b$, are indexed by tuples $(x,\mathbf{j})\in R^n\times \Z_{\ge 0}^n$ with $\text{wt}(\mathbf{j})<m$. By Lemma \ref{lem:infitDerLine} we see the non-zero columns in $C^k_{L(b)}\otimes \Ind_{L_0(b)}$ will correspond to tuples with $x\in L_1(b)\subseteq S$. In general in all the matrices in the set $\{C^k_{L_1(b)}\otimes \Ind_{L_0(b)}\}_{b\in \P R^{n-1}}$ the non-zero columns all correspond to points $(x,\mathbf{j})\in S\times \Z_{\ge 0}$ with $\text{wt}(\mathbf{j})<m$. This gives us the required bound.
\end{proof}

Let $E$ be a matrix of size $p_1^n\Delta_{n,m-1}\times \delta_{n,kp_1-1}$ representing the linear map $\text{EVAL}_{\F^n_{p_1}}^m$ restricted to the space $\F_{p_1}[x_1,\hdots,x_n]_{=kp_1-1}$ (with  some arbitrary basis). Given a direction $b_1\in \P\F_{p_1}^{n-1}$, we let $D_{b_1}$ be the $\Delta_{n,k-1}\times \delta_{n,kp_1-1}$ matrix representing the linear map $\text{EVAL}_{b_1}^k$ restricted to the space $\F_{p_1}[x_1,\hdots,x_n]_{=kp_1-1}$. For $b=(b_1,b_0)\in \P\F_{p_1}^{n-1}\times \P R_0^{n-1}$, Lemma \ref{lem:infitDerLine} implies $$C^k_{L(b)}\cdot \text{EVAL}_{\F^n_{p_1}}^m(f)=\text{EVAL}_{b_1}^k(f)$$ for any $f\in \F_{p_1}[x_1,\hdots,x_n]_{=kp_1-1}$. This implies $$C^k_{L(b)}\cdot E=D_{b_1}.$$ Let $I_{N_0^n}$ be the identity matrix of size $N_0^n\times N_0^n$. Using Lemma \ref{lem:crankMatMult} we have,
\begin{align*}
    \crank\{C^k_{L_1(b)}\otimes \Ind_{L_0(b)}\}_{b\in \P R^{n-1}}&\ge \crank\{(C^k_{L_1(b)}\otimes \Ind_{L_0(b)})\cdot (E\otimes I_{N_0^n})\}_{b\in \P R^{n-1}}\\
    &=\crank\{(C^k_{L_1(b)}\cdot E) \otimes \Ind_{L_0(b)}\}_{b\in \P R^{n-1}} && \text{(Using Fact }\ref{fact:multOfKronecker}\text{)}\\
    &=\crank\{D_{b_1}\otimes \Ind_{L_0(b_1,b_0)}\}_{b = (b_1,b_0)\in \P \F_{p_1}^{n-1}\times \P R_0^{n-1}}\numberthis \label{eq:Dbrankeq}.
\end{align*}

To lower bound $\crank\{D_{b_1}\otimes \Ind_{L_0(b_1,b_0)}\}_{(b_1,b_0)\in \P \F_{p_1}^{n-1}\times \P R_0^{n-1}}$ we will use Lemma \ref{lem:crankTensorProduct}. To that end we want to lower bound $\crank\{D_{b_1}\}_{b_1\in \P\F_{p_1}^{n-1}}$ and $\crank\{\Ind_{L_0(c,b_0)}\}_{b_0\in \P R_0^{n-1}}$ for $c\in \P \F_{p_1}^{n-1}$.

\begin{claim}\label{claim:crankCparameterBound}
For all $c\in \P \F_{p_1}^{n-1}$ we have
\em{$$\crank\{\Ind_{L_0(c,b_0)}\}_{b_0\in \P R_0^{n-1}}\ge \frac{N_0^{n-1}}{\prod\limits_{i=2}^{r+1} \left(2-p_i^{-1}\right)^{n}}, $$}

\end{claim}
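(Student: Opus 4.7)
The plan is to apply the inductive hypothesis to a cleverly chosen ``slice'' of the Kakeya set. Fix $c \in \P\F_{p_1}^{n-1}$ and define
$$T_c = \bigcup_{b_0 \in \P R_0^{n-1}} L_0(c,b_0) \ \subseteq \ R_0^n.$$
Since for every $b_0 \in \P R_0^{n-1}$ the pair $(c,b_0)$ is a legal direction in $\P R^{n-1}$ (both $c$ and $b_0$ are individually legal directions), the line $L_0(c,b_0) \subseteq T_c$ is a line in direction $b_0$. Thus $T_c$ is itself a Kakeya set in $R_0^n$. Since $N_0 = p_2 \cdots p_{r+1}$ is a product of $r$ distinct primes, the inductive hypothesis applies and yields
$$|T_c| \ \geq \ \frac{N_0^n}{\prod_{i=2}^{r+1}\left(2-p_i^{-1}\right)^n}.$$

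Next, I would pass from this size bound to the desired $\crank$ bound via (the argument behind) Lemma~\ref{lem:rankSizeLine}. Consider the matrix $M$ whose rows are the vectors $\{\Ind_{L_0(c,b_0)}\}_{b_0 \in \P R_0^{n-1}}$ over $\F_{p_1}$. This is a line matrix for the Kakeya set $T_c$ in the sense of Definition~\ref{def:linematrix}: one row per direction, each row the indicator of a line in that direction, and the support (set of non-zero columns) is exactly $T_c$. The iterative argument in the proof of Lemma~\ref{lem:rankSizeLine} only uses these two structural features, so it goes through verbatim and gives
$$\crank\{\Ind_{L_0(c,b_0)}\}_{b_0 \in \P R_0^{n-1}} \ = \ \rank_{\F_{p_1}}(M) \ \geq \ \frac{|T_c|}{|R_0|} \ = \ \frac{|T_c|}{N_0}.$$

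Combining the two displayed inequalities yields
$$\crank\{\Ind_{L_0(c,b_0)}\}_{b_0 \in \P R_0^{n-1}} \ \geq \ \frac{N_0^{n-1}}{\prod_{i=2}^{r+1}\left(2-p_i^{-1}\right)^n},$$
which is exactly the claim. There is no real obstacle here; the only point requiring a touch of care is the verification that $T_c$ is a bona fide Kakeya set in $R_0^n$, which in turn rests on the definition of projective directions in $R^n$ as the product $\P \F_{p_1}^{n-1} \times \P R_0^{n-1}$, so that any fixed legal $c$ pairs with every legal $b_0$ to form a legal direction of $R$ and hence contributes a line of the Kakeya set $S$.
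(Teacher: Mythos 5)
Your proof is correct and matches the paper's argument essentially verbatim: both identify $\bigcup_{b_0} L_0(c,b_0)$ as a Kakeya set in $R_0^n$, apply the inductive hypothesis to bound its size, and then use the second part of Lemma~\ref{lem:rankSizeLine} to convert that size bound into the $\crank$ bound by dividing by $|R_0| = N_0$. Your extra care in verifying that $(c,b_0)$ is a legal direction for every $b_0$ is a welcome (if implicit in the paper) detail, but the route is the same.
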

\begin{proof}

For a fixed $c\in \P\F_{p_1}^{n-1}$ we see that $$\bigcup_{b_0\in \P R_0^{n-1}} L_0(c,b_0)\subseteq R_0^n$$ is a Kakeya set in $R_0^n$ which is a union of lines in every direction. We note $\crank(\{\Ind_{L_0(c,b_0)}\}_{b_0\in \P R_0^{n-1}})$ is just the dimension of the subspace spanned by $\{\Ind_{L_0(c,b_0)}\}_{b_0\in \P R_0^{n-1}}$. Using Lemma \ref{lem:rankSizeLine} and the induction hypothesis we have,

\begin{align}
    \crank(\{\Ind_{L_0(c,b_0)}\}_{b_0\in \P R_0^{n-1}})\ge \frac{\left|\bigcup_{b_0\in \P R_0^{n-1}} L_0(c,b_0) \right|}{|R_0|}\ge \frac{N_0^{n-1}}{\prod\limits_{i=2}^{r+1} \left(2-p_i^{-1}\right)^{n}}.\label{eq:crankL0}
\end{align}

\end{proof}

\begin{claim}[crank bound for $\{D_{b_1}\}_{b_1\in \P\F_{p_1}^{n-1}}$]\label{claim:crankBparameterBound}
\em{$$\crank(\{D_{b_1}\}_{b_1\in \P\F_{p_1}^{n-1}})\ge \delta_{n,kp_1-1}.$$}
\end{claim}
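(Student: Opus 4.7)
The plan is to show that the concatenated matrix $[D_{b_1}]_{b_1\in \P\F_{p_1}^{n-1}}$ has full column rank $\delta_{n,kp_1-1}$; since $\delta_{n,kp_1-1}$ is exactly its number of columns, this gives the desired inequality. Full column rank is equivalent to triviality of the joint right-kernel of the maps $D_{b_1}$. Translating back through the definition of $D_{b_1}$, a vector in this kernel is the coefficient vector of a homogeneous polynomial $f \in \F_{p_1}[x_1,\ldots,x_n]_{=kp_1-1}$ such that $\text{EVAL}^k_{b_1}(f)=0$ for every $b_1\in\P\F_{p_1}^{n-1}$; equivalently, $\text{mult}(f,b_1)\ge k$ for every such $b_1$. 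So the claim reduces to: if such an $f$ vanishes to order $\ge k$ at every projective point, then $f\equiv 0$.

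The key intermediate step is to promote this vanishing condition from projective points to every non-zero affine point of $\F_{p_1}^n$. Because $f$ is homogeneous of degree $d = kp_1-1$, a direct expansion of $f(\lambda x + z) = \lambda^{d} f(x + z/\lambda)$ yields the scaling identity $f^{(\mathbf{j})}(\lambda a) = \lambda^{d-\text{wt}(\mathbf{j})} f^{(\mathbf{j})}(a)$ for every $\lambda\in \F_{p_1}^{\times}$, so multiplicities are invariant along rays through the origin. Hence $\text{mult}(f,a)\ge k$ at every $a\in \F_{p_1}^n\setminus\{0\}$. Homogeneity of degree $d$ also forces every Hasse derivative of weight $<d$ to vanish at the origin, so $\text{mult}(f,0)\ge kp_1-1$.

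To finish, I would apply Lemma \ref{multSchwartz} (Schwartz-Zippel with multiplicities) with $U=\F_{p_1}$ and degree bound $d=kp_1-1$, and combine with the two lower bounds just derived:
$$(kp_1-1) + k(p_1^n - 1)\;\le\;\sum_{a\in \F_{p_1}^n}\text{mult}(f,a)\;\le\;(kp_1-1)\,p_1^{n-1}.$$
After rearrangement this becomes $k(p_1-1) + p_1^{n-1}\le 1$, which fails for every $p_1\ge 2$, $k\ge 1$, $n\ge 1$. Therefore $f=0$, the joint kernel is trivial, and the stacked matrix has full column rank $\delta_{n,kp_1-1}$, proving the claim.

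I expect the only genuinely technical point to be the multiplicity-scaling identity for Hasse derivatives on homogeneous polynomials, but this is a short computation from the defining expansion $f(x+z)=\sum_{\mathbf{j}} f^{(\mathbf{j})}(x)z^{\mathbf{j}}$ under the substitution $x\mapsto \lambda x$, $z \mapsto \lambda z$. Once this is in hand, the rest is routine bookkeeping, and the argument essentially reproduces the multiplicity-based polynomial-method proof behind Theorem \ref{thm:TightFqbound}, now re-encoded as a rank statement about the maps $D_{b_1}$.
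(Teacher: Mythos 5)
Your proof is correct and follows essentially the same route as the paper: both reduce the crank bound to injectivity of the stacked evaluation map on $\F_{p_1}[x_1,\ldots,x_n]_{=kp_1-1}$, use homogeneity to promote vanishing to order $k$ from projective representatives to all of $\F_{p_1}^n$, and conclude via Lemma~\ref{multSchwartz}. Your explicit Hasse-derivative scaling identity and the sharper multiplicity count at the origin are nice touches but not needed; the paper simply uses $\sum_a \text{mult}(f,a)\ge k\,p_1^n > (kp_1-1)p_1^{n-1}$.
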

\begin{proof}
To lower bound $\crank(\{D_{b_1}\}_{b_1\in \P\F_{p_1}^{n-1}})$ we will examine the matrix $D_{\P\F_{p_1}^{n-1}}$ obtained by concatenating all the matrices in $\crank(\{D_{b_1}\}_{b_1\in \P\F_{p_1}^{n-1}})$ along their columns. We see that this is precisely the matrix for the map $\text{EVAL}_{\P \F_{p_1}^{n-1}}^k$ restricted on the input space $\F_{p_1}[x_1,\hdots,x_n]_{=kp_1-1}$.

We claim $D_{\P\F_{p_1}^{n-1}}$ is injective over $\F_{p_1}[x_1,\hdots,x_n]_{=kp_1-1}$. If a polynomial $f\in\F_{p_1}[x_1,\hdots,x_n]_{=kp_1-1}$ lies in the kernel of $D_{\P\F_{p_1}^{n-1}}$ then that means that $f$ and all its Hasse derivatives of weight at most $k$ vanish over $\P\F_{p_1}^{n-1}$. As $f$ is homogenous, all its Hasse derivatives will be too. This means that $f$ and all its Hasse derivatives of weight at most $k$ vanish over all of $\F^n_{p_1}$. This means $f$ vanishes with multiplicity at least $k$ over all of $\F^n_{p_1}$. By Lemma \ref{multSchwartz} we see that $f$ must be identically zero. Hence, we have the desired injectivity.

As $\F_{p_1}[x_1,\hdots,x_n]_{=kp_1-1}$ has dimension $\delta_{n,kp_1-1}$ we see that $D_{\P\F_{p_1}^{n-1}}$ must be of rank at least $\delta_{n,kp_1-1}$. This also gives us the desired crank bound.
\end{proof}

Using Lemma \ref{lem:crankTensorProduct} and equation \eqref{eq:Dbrankeq} with the bounds from Claim \ref{claim:crankCparameterBound} and Claim \ref{claim:crankBparameterBound} we have,
\begin{align*} 
\crank\{C^k_{L_1(b)}\otimes \Ind_{L_0(b)}\}_{b\in \P R^{n-1}}&\ge  \crank\{D_{b_1}\otimes \Ind_{L_0(b_1,b_0)}\}_{(b_1,b_0)\in \P \F_{p_1}^{n-1}\times \P R_0^{n-1}}\\ 
&\ge \frac{N_0^{n-1}}{\prod\limits_{i=2}^{r+1} \left(2-p_i^{-1}\right)^{n}}\cdot \delta_{n,kp_1-1}.
\end{align*}

Using Lemma \ref{lem:rankSizeProject} we have,

\begin{align}
    |S|\binom{2k-k/p_1+n-1}{n}\ge \frac{N_0^{n-1}}{\prod\limits_{i=2}^{r+1} (2-1/p_i)^{n}}\binom{kp_1+n-2}{n-1}.\label{eq:finalSbound}
\end{align}

To get the right bound we assume $k$ is a perfect square and apply \eqref{eq:finalSbound} on the set $S^{\sqrt{k}}$ which is the product of $S$ with itself $\sqrt{k}$ times. It is going to be a Kakeya set in $R^{n\sqrt{k}}$ by Lemma~\ref{lem:productkakeya}. Applying the bound for $S^{\sqrt{k}}$ we have,
$$|S|^{\sqrt{k}}\binom{2k-k/p_1+\sqrt{k}n-1}{\sqrt{k}n}\ge \frac{N_0^{\sqrt{k}n-1}}{\prod\limits_{i=2}^{r+1} (2-1/p_i)^{\sqrt{k}n}}\binom{kp_1+\sqrt{k}n-2}{\sqrt{k}n-1}.$$
Rearranging the terms we have
$$|S|^{\sqrt{k}}\ge \frac{N'^{\sqrt{k}n-1}}{\prod\limits_{i=2}^{r+1} (2-1/p_i)^{\sqrt{k}n}}\frac{(kp_1+\sqrt{k}n-2)\hdots (kp_1)}{(2k-k/p_1+\sqrt{k}n-1)\hdots (2k-k/p_1+1)}\frac{\sqrt{k}n}{2k-k/p_1+\sqrt{k}n-1}.$$
Taking $\sqrt{k}$th root on both sides and letting $k$ grow to infinity in the set of numbers which are square multiples of $p_1$ gives the desired result.

\end{proof}

\section{Kakeya Sets over $\Z/p^k\Z$}\label{sec:powConj}
In this section we prove Theorem~\ref{thm:reduceRankpk}. Recall that $W_{p^k,n}$ is the $p^{kn} \times p^{kn}$ point-hyperplane incidence matrix defined in the introduction.  We want to show the size of any Kakeya set $ S \subset (\Z/p^k\Z)^n$ is lower bounded by the $\F_p$ rank of  $W_{p^k,n}$. 

We start with a lemma that generalizes the following simple observation. Suppose we have a matrix $M$ with entries in the set $\{0,1,-1\}$ and let $\hat M$ be the same matrix with all $-1$ entries replaced with $1$s. It is easy to see that $$\rank_{\C}(M) \geq \rank_{\F_2}(\hat M)$$ since, if some sub-determinant in $\hat M$ is not zero over $\F_2$ then the corresponding determinant in $M$ cannot be zero over the complex numbers. This trivial claim has the following less trivial generalization involving roots of unity of any prime power order.

\begin{lem}\label{lem:rankCtoF}
Let $\gamma$ be a complex primitive $p^k$th root of unity for prime $p$ and natural number $k$. Let $M$ be a matrix whose entries belong to the set $\{0,1,\gamma,\hdots,\gamma^{p^k-1}\}$. Let $\hat M$ be a matrix of the same dimensions of $M$ and with entries
$$ \hat M_{ij} = \left\{
	\begin{array}{ll}
		0  & \mbox{if } M_{i,j}=0 \\
		1 & \mbox{otherwise} 
	\end{array}
\right. $$
Then we have,
\em{$$\text{rank}_{\C}(M)\ge \text{rank}_{\F_p}(\hat{M}).$$}.
\end{lem}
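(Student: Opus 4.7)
The plan is to exhibit a ring homomorphism from $\Z[\gamma]$ (the ring of integers of the cyclotomic field $\Q(\gamma)$) onto $\F_p$ under which every power of $\gamma$ is sent to $1$, so that entry-wise reduction turns $M$ into $\hat M$ modulo $p$. Once we have such a map, the rank inequality follows from a standard minor lifting argument.

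The key algebraic fact I would use is that the $p^k$th cyclotomic polynomial $\Phi_{p^k}(x) = 1 + x^{p^{k-1}} + x^{2p^{k-1}} + \cdots + x^{(p-1)p^{k-1}}$ satisfies $\Phi_{p^k}(1) = p$. Since $\Z[\gamma] \cong \Z[x]/(\Phi_{p^k}(x))$, the Chinese-remainder-style computation
\[
\Z[\gamma]/(1-\gamma) \;\cong\; \Z[x]/(\Phi_{p^k}(x),\, 1-x) \;\cong\; \Z/\Phi_{p^k}(1)\Z \;\cong\; \F_p
\]
shows that $(1-\gamma)$ is a (prime) ideal of $\Z[\gamma]$ whose residue field is $\F_p$. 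Let $\pi\colon \Z[\gamma]\to\F_p$ denote the resulting surjection. In this quotient $\gamma\equiv 1$, so $\pi(\gamma^j)=1$ for every $j$, while $\pi(0)=0$. Consequently, applying $\pi$ entrywise to $M$ yields exactly $\hat M$, viewed as a matrix over $\F_p$.

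From here the argument is routine. Suppose $\rank_{\F_p}(\hat M)=r$; then there is an $r\times r$ submatrix $\hat M'$ of $\hat M$ with $\det(\hat M')\neq 0$ in $\F_p$. Let $M'$ be the corresponding $r\times r$ submatrix of $M$; its determinant $\Delta=\det(M')$ lives in $\Z[\gamma]$. Since $\pi$ is a ring homomorphism, $\pi(\Delta)=\det(\hat M')\neq 0$ in $\F_p$, and therefore $\Delta\neq 0$ in $\Z[\gamma]\subseteq\Q(\gamma)\subseteq\C$. Thus $M$ contains a non-singular $r\times r$ submatrix over $\C$, giving $\rank_\C(M)\geq r=\rank_{\F_p}(\hat M)$.

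I do not expect any serious obstacle: the only substantive ingredient is the identification $\Z[\gamma]/(1-\gamma)\cong\F_p$, which is standard cyclotomic number theory (total ramification of $p$ in $\Z[\gamma]$). The rest is the classical lift-a-nonzero-minor trick, exactly analogous to the $\{0,1,-1\}\to\{0,1\}$ example the authors use to motivate the lemma.
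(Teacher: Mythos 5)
Your proof is correct and follows essentially the same route as the paper: both arguments hinge on the fact that the minimal polynomial $\Phi_{p^k}$ of $\gamma$ satisfies $\Phi_{p^k}(1)=p$, and both conclude via the lift-a-nonzero-minor argument. The paper phrases the key step as polynomial divisibility (a subdeterminant $f$ with $f(\gamma)=0$ is divisible by $\Phi_{p^k}$, hence $f(1)\equiv 0 \bmod p$), whereas you package the same fact as the ring homomorphism $\Z[\gamma]\to\Z[\gamma]/(1-\gamma)\cong\F_p$; this is a cosmetic difference only.
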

\begin{proof}

Let $Q(x)$ be a matrix obtained from $M$ by replacing $\gamma$ with a formal variable $x$ so that  $Q(\gamma)=M$ and $Q(1)=\hat{M}$. Let $f(x)$ be the determinant of some  sub matrix of $Q(x)$. If the corresponding sub-determinant of $M$ is zero then we have $f(\gamma)=0$. As $f$ has integer coefficients it  must be divisible by the minimal polynomial of $\gamma$ which is $$ m(x)= \frac{x^{p^k}-1}{x^{p^{k-1}}-1} = 1  +x^{p^{k-1}} + x^{2p^{k-1}} + \ldots + x^{(p-1)p^{k-1}}.$$ Hence $f(1)=0$ modulo $p$ and so the coresponding sub-determinant is zero also in $\hat M$ when computed over $\F_p$. This proves the lemma since a non-zero  $r \times r$ determinant in $\hat M$ over $\F_p$ implies the same non-zero determinant in $M$ (over $\C$).
\end{proof}

We are now ready to prove Theroem~\ref{thm:reduceRankpk}, stated here again for convenience.

\begin{repthm}{thm:reduceRankpk}
Given a prime $p$ and numbers $k,n$, every Kakeya set $S$ in $(\Z/p^k\Z)^n$ satisfies,
\em{$$|S|\ge \text{rank}_{\F_p}(W_{p^k,n}).$$}
\end{repthm}
\begin{proof}
Let $R=\Z/p^k\Z$. Given a Kakeya set $S$ in $R^n$ consider the line-matrix  $M_S$ given by Definition~\ref{def:linematrix}. Let $\gamma$ be a primitive $p^k$th root of unity in $\C$ and let $F$ be the $p^{kn} \times p^{kn}$ matrix whose rows and columns are indexed by $R^n$ and whose entry in position $(i,j) \in R^n \times R^n$ is $$ F_{i,j} = \gamma^{\langle i, j \rangle}$$ ($F$ is the  complex Discrete Fourier Transform matrix for the group $R^n$). 

We consider the product $M_S\cdot F$ over $\C$. Given a direction $b\in \P R^{n-1}$ and a point $x(b)$ consider a line $$L(b)=\{x(b)+tb|t\in R\}$$ in that direction contained in $S$. When we multiply the row $\Ind_{L(b)}$ of $M_S$ with $F$, the $j$'th coordinate,  for $j \in R^n$, is given by
$$(\Ind_{L(b)}\cdot F)_j=\sum\limits_{t\in R}\gamma^{\langle x(b)+tb,j\rangle}=\left\{
	\begin{array}{ll}
		0  & \mbox{if } \langle b,j\rangle\ne 0 \mod p^k \\
		p^k\gamma^{\langle x(b),j\rangle} & \mbox{otherwise} 
	\end{array}
\right. .$$
A lower bound on the rank of $M_S\cdot F$ over $\C$ will give a lower bound on the rank of $M_S$ which, using Lemma \ref{lem:rankSizeLine}, will give us a lower bound on $|S|$.
Applying Lemma \ref{lem:rankCtoF} we have that the rank of $M=p^{-k}(M_S\cdot F)$ over $\C$ is lower bounded by the rank of $\hat{M}$ over $\F_p$ where the entry in position $(b,j)\in R^n\times R^n$  of $\hat{M}$ is
$$\hat{M}_{(b,j)} = \left\{
	\begin{array}{ll}
		0  & \mbox{if } \langle b,j\rangle\ne 0 \mod p^k \\
		1 & \mbox{otherwise} 
	\end{array}
\right. .$$
This completes the proof since the matrix  $\hat{M}$ has the same rows as the matrix $W_{p^k,n}$ (recall that we defined $W_{p^k,n}$ to have repeated rows corresponding to different scaling of the same vector). This completes the proof.
\end{proof}

This reduces the question of lower bounding the size of Kakeya sets in $(\Z/p^k\Z)^n$ to the estimation of the rank of a concrete matrix, independent of the set $S$. In the next section we show that the matrix $W_{p^k,n}$ contains an identity matrix of size roughly $p^{kn/2}$. 

\subsection{Rank of $W_{p^k,n}$ via Matching Vector families}

\begin{define}[Matching Vectors]
A {\em matching vector} (MV) family over $(\Z/N\Z)^n$ is a pair $(U,V)$ such that 
$U=(u_1,\hdots,u_m)$ and $V=(v_1,\hdots,v_m)$ with each $u_i$ and each $v_j$ belonging to the set $(\Z/N\Z)^n$ and such that 
$\langle u_i,v_j\rangle  = 0 \mod N$ iff $i=j$.
\end{define}

The following simple observation relates the size of an MV family with the rank of the point-hyperplane incidence matrix.

\begin{lem}\label{lem:matchingReduction}
If there exists a matching vector family of size $m$ over $(\Z/p^k\Z)^n$ then, for any field $\F$, we have
\em{$$\text{rank}_{\F}(W_{p^k,n})\ge m.$$}
\end{lem}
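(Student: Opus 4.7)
The plan is straightforward: exhibit an $m\times m$ submatrix of $W_{p^k,n}$ that equals the identity matrix $I_m$, since $I_m$ has rank $m$ over every field.

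First I would extract the rows of $W_{p^k,n}$ indexed by the vectors $u_1,\ldots,u_m$ and the columns indexed by $v_1,\ldots,v_m$ coming from the matching vector family $(U,V)$. Let $W'$ denote the resulting $m \times m$ submatrix, so $W'_{i,j} = (W_{p^k,n})_{u_i, v_j}$. By the definition of $W_{p^k,n}$, this entry equals $1$ if $\langle u_i, v_j \rangle = 0 \bmod p^k$ and $0$ otherwise. The defining property of a matching vector family says this inner product is zero modulo $p^k$ if and only if $i = j$, so $W'_{i,j} = 1$ when $i=j$ and $W'_{i,j} = 0$ otherwise; that is, $W' = I_m$.

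Since $I_m$ has rank $m$ over any field $\F$, and $W'$ is a submatrix of $W_{p^k,n}$, we conclude $\text{rank}_{\F}(W_{p^k,n}) \geq \text{rank}_{\F}(W') = m$, which is the claim.

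There is no real obstacle here; the lemma is essentially a restatement of the matching vector condition in matrix language. The content of the section is in applying this reduction, together with a lower bound on the size of MV families over $(\Z/p^k\Z)^n$, to deduce a rank lower bound on $W_{p^k,n}$ and hence (via Theorem~\ref{thm:reduceRankpk}) a Kakeya bound.
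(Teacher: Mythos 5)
Your proof is correct and is exactly the paper's argument: the paper's proof is the one-line observation that the submatrix of $W_{p^k,n}$ with rows labeled by $U$ and columns labeled by $V$ is (after reordering) the identity matrix. Nothing is missing.
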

\begin{proof}
The sub-matrix corresponding to the rows labeled by $U$ and the columns labelled by $V$  in $W_{p^k,n}$ is the identity matrix (after reordering).
\end{proof}

Constructions of large MV families have found surprising application in combinatorics and theoretical computer science. In particular, constructions of MV families for small composite $N$ (even for $N=6$) and growing $n$ given by Grolmusz \cite{grolmusz2000superpolynomial} have found a surprisingly large number of applications. For our purposes, when $N$ is a large prime power, we will use a less known construction originally given in  \cite{dvir2011matching} and improved by \cite{YUAN2012494}.

\begin{thm}[\cite{YUAN2012494}]
For every integer $n$ and any sufficiently large $N$, there exist MV families over  $(\Z/N\Z)^n$ of size at least,
$$\left(\frac{N}{n-2}\right)^{(n-2)/2}.$$
\end{thm}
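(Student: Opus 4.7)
The plan is to construct the matching vector family explicitly via a polynomial evaluation encoding. Write $d = n-2$. To each index $i$ in the family, I would associate a polynomial $p_i(x) \in (\Z/N\Z)[x]$ of degree $n-1$ with a distinguished root $\alpha_i \in \Z/N\Z$, and define
$$u_i = (1, \alpha_i, \alpha_i^2, \ldots, \alpha_i^{n-1}), \qquad v_i = \text{(coefficient vector of } p_i),$$
so that $\langle u_i, v_j \rangle = p_j(\alpha_i)$. The diagonal condition $p_i(\alpha_i) = 0$ holds by design, and the matching property reduces to ensuring $p_j(\alpha_i) \not\equiv 0 \pmod N$ whenever $i \ne j$.

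To reach the claimed size, I would parameterize the polynomials by $(d/2)$-tuples $\bar{a} = (a_1,\ldots,a_{d/2})$ drawn from a subset $A \subseteq \Z/N\Z$ of size $\approx N/d$. Let $\alpha_{\bar a}$ be an injective encoding of $\bar a$ (for example a mixed-radix map into $\Z/N\Z$), and factor $p_{\bar a}(x) = (x - \alpha_{\bar a})\cdot Q_{\bar a}(x)$, where the $d$ roots of $Q_{\bar a}$ are chosen as symmetric functions of the coordinates $a_r$. This produces roughly $|A|^{d/2} \approx (N/d)^{d/2}$ candidate pairs, and each cross-product $p_{\bar b}(\alpha_{\bar a})$ decomposes as a product of structured differences in $\Z/N\Z$ that can be analyzed factor-by-factor.

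The main obstacle is the zero-divisor structure of $\Z/N\Z$. Individual factors being nonzero does not guarantee that the product $p_{\bar b}(\alpha_{\bar a})$ is nonzero modulo $N$, since at each prime power $p^k \parallel N$ the $p$-adic valuations of the $n-1$ factors can add up to at least $k$. The crux of the proof is a valuation-tracking analysis that either chooses $A$ and the symmetric-function rule defining $Q_{\bar a}$ so that bad coincidences cannot occur, or uses a probabilistic deletion argument to remove a controlled fraction of bad pairs while preserving a $(N/d)^{d/2}$-sized subfamily. Yuan's refinement of the earlier construction of \cite{dvir2011matching} optimizes this valuation bookkeeping and delivers the exponent $(n-2)/2$ in $N$.
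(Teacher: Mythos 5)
The paper does not prove this statement itself --- it is quoted from \cite{YUAN2012494} --- so the comparison is against the known construction there and in \cite{dvir2011matching}. Your framework (Vandermonde vectors $u_i$ against coefficient vectors $v_j$, so that $\langle u_i,v_j\rangle = p_j(\alpha_i)$) is a legitimate way to set up an MV family, and your count of $(N/d)^{d/2}$ candidate indices matches the target. But the entire content of the theorem is the off-diagonal condition $p_{\bar b}(\alpha_{\bar a})\not\equiv 0 \pmod N$ for $\bar a\ne\bar b$, and that is exactly the step you defer to an unspecified ``valuation-tracking analysis'' or ``probabilistic deletion argument.'' This is a genuine gap, not a detail: $p_{\bar b}(\alpha_{\bar a})$ is a product of $n-1$ differences in $\Z/N\Z$, and in the case the paper actually uses, $N=p^k$ with $k$ large relative to $n$, you cannot choose a parameter set $A$ of size $\approx N/d$ all of whose relevant differences are units (only $p=N^{1/k}$ residues are pairwise distinct mod $p$), so the $p$-adic valuations of the individual factors really can accumulate to $k$. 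Nothing in your sketch rules this out, and a deletion argument has no a priori control on the fraction of bad pairs, so neither of your two proposed escape routes is known to close the gap.

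The actual construction sidesteps zero divisors entirely by making the inner product a small non-negative integer rather than a product. Take $k=n-2$ and $s\approx\sqrt{N/k}$, index the family by $a\in\{0,1,\ldots,s-1\}^k$, and set
$$ u_a=\left(a_1,\ldots,a_k,\ \textstyle\sum_i a_i^2,\ 1\right),\qquad v_b=\left(-2b_1,\ldots,-2b_k,\ 1,\ \textstyle\sum_i b_i^2\right), $$
so that $\langle u_a,v_b\rangle=\sum_i(a_i-b_i)^2=\|a-b\|_2^2$. Since $0\le\|a-b\|_2^2\le k(s-1)^2<N$, the inner product vanishes mod $N$ if and only if it vanishes over $\Z$, i.e.\ if and only if $a=b$; the family has size $s^k\approx(N/(n-2))^{(n-2)/2}$, and \cite{YUAN2012494} optimizes the choice of index set to obtain the stated constant. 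The structural difference from your approach is that the sum-of-squares identity converts the mod-$N$ non-vanishing question into an integer question, whereas a product of linear factors forces a fight with the zero-divisor structure of $\Z/N\Z$ --- precisely the fight your sketch does not carry out.
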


Combining this theorem (with $N = p^k$) with Lemma \ref{lem:matchingReduction} and Theorem \ref{thm:reduceRankpk} gives us the  a lower  bound for Kakeya sets in $(\Z/p^k\Z)^n$ of the order of  $p^{kn/2}$ which is worse than the best known bounds. Using larger MV families to lower bound the rank of $W_{p^k,n}$ cannot lead to significantly stronger bounds as it is shown in \cite{dvir2013matching} that those cannot be larger than $N^{n/2 + O(1)}$ for any $N$.
\bibliographystyle{alpha}
\bibliography{bibliography}

\end{document}